\newcommand{\iprod}[1]{\langle#1\rangle}
\newcommand{\bigiprod}[1]{\bigl\langle#1\bigr\rangle}
\newcommand{\uX}{u_{\mathbb{X}}}
\newcommand{\fX}{f_{\mathbb{X}}}
\newcommand{\RX}{R_{\mathbb{X}}}
\newcommand{\thetaX}{\theta_{\mathbb{X}}}
\newcommand{\rhoX}{\rho_{\mathbb{X}}}
\newcommand{\uzeroX}{u_{0\mathbb{X}}}
\newcommand{\vB}{\vec{\mathcal{B}}}
\newcommand{\jump}[1]{\llbracket#1\rrbracket}
\newtheorem{theorem}{Theorem}[section]
\newtheorem{lemma}[theorem]{Lemma}
\newtheorem{remark}[theorem]{Remark}
\title{Uniform Stability for a Spatially-Discrete, \\
Subdiffusive Fokker--Planck Equation\thanks{The support of KFUPM through the
project No.~SB191003 is gratefully acknowledged.}}
\author{William McLean and Kassem Mustapha}
\date{December 23, 2020}
\begin{document}
\maketitle
\begin{abstract}
We prove stability estimates for the spatially discrete, Galerkin solution of a
fractional Fokker--Planck equation, improving on previous results in several 
respects.  Our main goal is to establish that the stability constants are 
bounded uniformly in the fractional diffusion exponent~$\alpha\in(0,1]$.  In 
addition, we account for the presence of an inhomogeneous term and show a 
stability estimate for the gradient of the Galerkin solution.  As a by-product,
the proofs of error bounds for a standard finite element approximation are 
simplified.
\end{abstract}
\section{Introduction}

We consider the stability of semidiscrete Galerkin methods for the
time-fractional Fokker--Planck 
equation~\cite{HenryLanglandsStraka2010,AngstmannEtAl2015}
\begin{equation}\label{eq: ibvp}
\begin{aligned}
\partial_tu-\nabla\cdot\bigl(\partial_t^{1-\alpha}\kappa\nabla u
	-\vec F\,\partial_t^{1-\alpha}u\bigr)&=g&
&\text{for $\vec x\in\Omega$ and $0<t\le T$,}\\
u&=u_0(\vec x)&&\text{for $\vec x\in\Omega$ when $t=0$,}\\
u&=0&&\text{for $\vec x\in\partial\Omega$ and $0<t\le T$.}
\end{aligned}
\end{equation}
Here, $\Omega$ is a bounded Lipschitz domain in $\mathbb{R}^d$ ($d\ge1$), the 
fractional exponent satisfies $0<\alpha\le1$ and the fractional time 
derivative is understood in the Riemann--Liouville sense: 
$\partial_t^{1-\alpha}=\partial_t\,\mathcal{I}^\alpha$ where the fractional 
integration operator~$\mathcal{I}^\alpha$ is defined as usual 
in~\eqref{eq: I mu} below. The diffusivity $\kappa\in L_\infty(\Omega)$ is 
assumed independent of time, positive and bounded below:
$\kappa(\vec x)\ge\kappa_{\min}>0$ for $\vec x\in\Omega$.
The forcing vector~$\vec F$ may depend both on $\vec x$~and $t$, and we assume 
that $\vec F$, $\partial_t\vec F$, $\nabla\cdot\vec F$~and 
$\nabla\cdot\partial_t\vec F$ are bounded on $\Omega\times[0,T]$.  Note 
that if $\alpha=1$ then $\partial_t^{1-\alpha}\phi=\phi$ so the governing 
equation in~\eqref{eq: ibvp} reduces to a classical Fokker--Planck equation.

If $\vec F$ is independent of~$t$, then by applying $\mathcal{I}^{1-\alpha}$ to 
both sides of the governing equation we find that \eqref{eq: ibvp} is 
equivalent to
\begin{equation}\label{eq: Caputo}
{}^{\mathrm{C}}\partial_t^\alpha u-\nabla\cdot\bigl(\kappa\nabla u-\vec Fu\bigr)
    =\mathcal{I}^{1-\alpha}g,
\end{equation}
where ${}^\mathrm{C}\partial_t^\alpha u=\mathcal{I}^{1-\alpha}\partial_tu$ is 
the Caputo fractional derivative of order~$\alpha$.  In this form, numerous
authors have studied the numerical solution of the problem, mostly for a 1D 
spatial domain~$\Omega=(0,L)$ and with $g\equiv0$.  For instance, 
Deng~\cite{Deng2007} considered the method of lines, Jiang and 
Xu~\cite{JiangXu2019} proposed a finite volume method,  Yang et 
al.~\cite{YangEtAl2018} a spectral collocation method, and Duong and 
Jin~\cite{DuongJin2020} a Wasserstein gradient flow formulation.

For both continuous and discrete solutions to fractional PDEs, it is natural to 
expect stability constants to remain bounded as~$\alpha\to1$ if the limiting
classical problem is stable.  In applications, the value of~$\alpha$ is 
typically estimated from measurements, and this process might be treated as 
an inverse problem. It would then be desirable that simulations of the 
forward problem are uniformly stable in~$\alpha$, particularly if the diffusion 
turns out to be classical ($\alpha=1$) or only slightly subdiffusive ($\alpha$ 
close to~$1$).  Perhaps for these reasons, interest in the question of 
$\alpha$-uniform stability and convergence seems to be growing.  Note that 
growth in the stability constant as~$\alpha\to0$ is of less concern, since 
very small values of~$\alpha$ have not been observed in real physical systems.

In the special case~$\vec F\equiv\vec 0$ of fractional diffusion, 
Chen and Stynes~\cite{ChenStynes2020} showed that as~$\alpha$ tends to~$1$
the solution of~\eqref{eq: Caputo} tends to the solution of the classical 
diffusion problem, uniformly in $x$~and $t$.  They also discussed several 
examples of numerical schemes for which the error analysis leads to constants 
that remain bounded as~$\alpha\to1$ (said to be $\alpha$-robust bounds), as 
well as several for which the constants blow up ($\alpha$-nonrobust bounds).  
Recent examples in the former category include Jin et 
al.~\cite[see Remark~4]{JinLiZhou2020}, Huang et 
al.~\cite[see Section~5]{HuangEtAl2020} and 
Mustapha~\cite[Lemma~3.1, Theorem~3.5]{Mustapha2020}.

We work with the weak solution~$u:(0,T]\to H^1_0(\Omega)$ of~\eqref{eq: ibvp}
characterized by
\begin{equation}\label{eq: u weak}
\iprod{u',v}+\iprod{\partial_t^{1-\alpha}\kappa\nabla u,\nabla v}
	-\iprod{\vec F\partial_t^{1-\alpha}u,\nabla v}=\iprod{g,v}
\quad\text{for $v\in H^1_0(\Omega)$}
\end{equation}
and $0<t\le T$, with $u(0)=u_0$, where $u'=\partial_t u$, 
$\iprod{u,v}=\int_\Omega uv$~and
$\iprod{\vec u,\vec v}=\int_\Omega \vec u\cdot\vec v$.  
Strictly speaking, to allow minimal assumptions on the regularity of the data 
$u_0$~and $g$, we define the solution~$u$ by requiring that it satisfy the 
time-integrated equation
\begin{equation}\label{eq: u integrated}
\iprod{u,v}+\iprod{\mathcal{I}^\alpha\kappa\nabla u,\nabla v}
-\bigiprod{\vB_1 u,\nabla v}=\iprod{f,v},
\end{equation}
where
\begin{equation}\label{eq: B1}
(\vB_1\phi)(t)=\int_0^t\bigl(\vec F\,\partial_t^{1-\alpha}\phi\bigr)(s)\,ds
\quad\text{and}\quad
f(t)=u_0+\int_0^t g(s)\,ds.
\end{equation}
In previous work, we have established that this problem is well-posed
\cite{LeMcLeanStynes2019,McLeanEtAl2019}.
 
For a fixed, finite dimensional subspace $\mathbb{X}\subseteq H^1_0(\Omega)$, 
the semidiscrete Galerkin solution~$\uX:[0,T]\to\mathbb{X}$ is given by
\begin{equation}\label{eq: integrated Galerkin}
\iprod{\uX,\chi}+\bigiprod{\mathcal{I}^\alpha\kappa\nabla\uX,\nabla\chi}
    -\bigiprod{\vB_1\uX,\nabla\chi}=\iprod{\fX,\chi}
\quad\text{for $\chi\in\mathbb{X}$},
\end{equation}
with $\fX(t)=\uzeroX+\int_0^t g(s)\,ds$ and with
$\uX(0)=\uzeroX\in\mathbb{X}$ a suitable approximation to~$u_0$.
Previously, we studied this problem in the particular case 
when~$\mathbb{X}=S_h$ is a space of continuous, piecewise-linear finite element 
functions corresponding to a conforming triangulation of~$\Omega$ with maximum 
element size~$h>0$.  We showed that the Galerkin finite element 
solution~$u_h(t)$ is stable in the norm of~$L_2(\Omega)$ when~$g(t)\equiv0$, 
satisfying the bound~\cite[Theorem~4.5]{LeMcLeanMustapha2018}
\begin{equation}\label{eq: uh stability g=0}
\|u_h(t)\|\le C_\alpha\|u_{0h}\|\quad\text{for $0\le t\le T$ and $0<\alpha<1$,}
\end{equation}
where $u_h(0)=u_{0h}\in S_h$ approximates~$u_0$. The method of proof relied on 
estimates for fractional integrals~\cite[Lemmas~3.2--3.4]{LeMcLeanMustapha2018} 
involving powers of~$(1-\alpha)^{-1}$, leading to a stability 
constant~$C_\alpha$ that blows up as~$\alpha\to1$. However, in the limiting 
case~$\alpha=1$ the semidiscrete finite element method is easily seen to be 
stable~\cite[Remark~4.7]{LeMcLeanMustapha2018}, that is, 
\eqref{eq: uh stability g=0} holds with $C_1<\infty$.  In the absence of 
forcing, that is, in the simple case~$\vec F\equiv\vec 0$ of fractional 
diffusion, the stability constant equals one: $\|u_h(t)\|\le\|u_{0h}\|$ 
for~$0<\alpha\le1$.

Our primary aim in what follows is to improve the results of our earlier 
paper~\cite{LeMcLeanMustapha2018} via a new analysis 
of~\eqref{eq: integrated Galerkin} that yields a uniform stability constant 
for~$0<\alpha\le1$, as well as allowing a non-zero source term~$g$.  Recently,
Huang et al.~\cite{HuangEtAl2020} have addressed the same question using a
different analysis that requires $1/2<\alpha\le1$ and $u_0\in H^1(\Omega)$,
with a stability constant that blows up as~$\alpha\to1/2$.

Throughout the paper, $C$ denotes a generic constant that may depend of $T$, 
$\Omega$, $\kappa$~and $\vec F$.  Dependence on any other parameters will be 
shown explicitly, and in particular, we write $C_\alpha$ to show that the 
constant may also depend on~$\alpha$.  
After citing some technical lemmas in \cref{sec: preliminaries}, we 
present the stability proof in \cref{sec: stability}, stating our main 
result as \cref{thm: uX stability}.  Using similar arguments, 
\cref{sec: gradient} establishes an estimate for the gradient of~$\uX$ 
in \cref{thm: grad uX}.  Combining these results gives, for 
$0\le t\le T$~and $0<\alpha\le1$,
\begin{equation}\label{eq: uX well-posed}
\begin{aligned}
\|\uX(t)\|+t^{\alpha/2}\|\nabla\uX(t)\|
    &\le C\biggl(\|\uzeroX\|+\int_0^t\|g(s)\|\,ds\biggr)
    +C\biggl(\frac{1}{t}\int_0^t\|sg(s)\|^2\,ds\biggr)^{1/2}\\
    &\le C\biggl(\|\uzeroX\|+t^{1/2}\int_0^t\|g(s)\|^2\,ds\biggr).
\end{aligned}
\end{equation}
(Here, the second bound follows from the first by 
\cref{lem: g alternative} with $\eta=1$.)  At the end of 
\cref{sec: gradient}, in \cref{remark: u stability}, we note that 
the exact solution~$u$ has the same uniform stability property as the 
semidiscrete Galerkin approximation~$\uX$, that is, \eqref{eq: uX well-posed} 
holds with $u$~and $u_0$ replacing $\uX$~and $\uzeroX$.  Also, in
\cref{remark: zero flux}, we discuss briefly the implications of 
enforcing a zero-flux boundary condition instead of the homogeneous Dirichlet 
one in problem~\eqref{eq: ibvp}.  \cref{sec: error} applies our new 
stability analysis to the piecewise linear Galerkin finite element 
solution~$u_h$, showing in \cref{thm: uh error} that 
$\|u_h-u\|=O(t^{-\alpha(2-r)/2}h^2)$~and 
$\|\nabla u_h-\nabla u\|=O(t^{-\alpha(2-r)/2}h)$.  These error bounds rely on
a regularity assumption involving the parameter~$r\in[0,2]$, which we justify 
in \cref{thm: u reg}. Finally, \cref{sec: DG} considers the time 
discretization of~\eqref{eq: ibvp} in the special case~$\vec F\equiv\vec 0$
of plain fractional diffusion using the discontinuous Galerkin method, proving 
a fully-discrete stability result in \cref{thm: DG}.

\section{Preliminaries}\label{sec: preliminaries}

This brief section introduces notations and gathers together results 
from the literature that we will use in our subsequent analysis.
Denote the fractional integral operator of order~$\mu>0$ by
\begin{equation}\label{eq: I mu}
(\mathcal{I}^\mu\phi)(t)=\int_0^t\omega_\mu(t-s)\phi(s)\,ds
\quad\text{for $t>0$, where $\omega_\mu(t)=\frac{t^{\mu-1}}{\Gamma(\mu)}$,}
\end{equation}
with~$\mathcal{I}^0\phi=\phi$, and observe that
$(\mathcal{I}^1\phi)(t)=\int_0^t\phi(s)\,ds$.  If we denote the Laplace 
transform of~$\phi$ by $\hat\phi(z)=\int_0^\infty e^{-zt}\phi(t)\,dt$ then
$(\widehat{\mathcal{I}^\mu\phi})(z)=\hat\omega_\mu(z)\hat\phi(z)$ and
$\hat\omega_\mu(z)=z^{-\mu}$.  Hence, assuming $\phi$ is real-valued with (say) 
compact support in~$[0,\infty)$, we find that
\begin{equation}\label{eq: I mu positive}
\int_0^t\bigiprod{\phi,\mathcal{I}^\mu\phi}\,ds
    =\frac{\pi\mu/2}{\pi}\int_0^\infty y^{-\mu}\|\hat\phi(iy)\|^2\,dy\ge0
\quad\text{if $0<\mu<1$.}
\end{equation}
Our analysis relies on 
properties of~$\mathcal{I}^\mu$ stated in the next three lemmas.

\begin{lemma}\label{lem: I nu I mu}
If $0\le\mu\le\nu\le1$, then for $t>0$~and 
$\phi\in L_2\bigl((0,t),L_2(\Omega)\bigr)$,
\[
\int_0^t\|(\mathcal{I}^\nu\phi)(s)\|^2\,ds\le2t^{2(\nu-\mu)}\int_0^t
    \|(\mathcal{I}^\mu\phi)(s)\|^2\,ds.
\]
\end{lemma}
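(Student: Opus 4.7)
My plan is to reduce the claim, via the semigroup property of fractional integration, to the special case $\mu=0$ and then apply Young's convolution inequality on $[0,t]$.

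Since $\mathcal{I}^{\nu}=\mathcal{I}^{\nu-\mu}\mathcal{I}^{\mu}$, I set $\psi=\mathcal{I}^{\mu}\phi$ and $\sigma=\nu-\mu\in[0,1]$, so that the desired bound becomes
\[
\int_0^t\|(\mathcal{I}^{\sigma}\psi)(s)\|^2\,ds\le 2t^{2\sigma}\int_0^t\|\psi(s)\|^2\,ds.
\]
The case $\sigma=0$ is trivial, so assume $\sigma>0$. Using the triangle inequality in $L_2(\Omega)$ pointwise in $s$,
\[
\|(\mathcal{I}^{\sigma}\psi)(s)\|\le\int_0^s\omega_{\sigma}(s-r)\,\|\psi(r)\|\,dr=(\omega_{\sigma}*\eta)(s),
\]
where $\eta(r)=\|\psi(r)\|$ is a scalar function on $(0,t)$. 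After extending $\eta$ and $\omega_{\sigma}\chi_{(0,t)}$ by zero to $\mathbb{R}$, the classical Young inequality $\|K*f\|_{L_2}\le\|K\|_{L_1}\|f\|_{L_2}$ gives
\[
\int_0^t\|(\mathcal{I}^{\sigma}\psi)(s)\|^2\,ds
  \le\|\omega_{\sigma}\|_{L_1(0,t)}^{2}\int_0^t\|\psi(s)\|^2\,ds
  =\frac{t^{2\sigma}}{\Gamma(\sigma+1)^{2}}\int_0^t\|\psi(s)\|^2\,ds,
\]
where I computed $\int_0^t\omega_{\sigma}(s)\,ds=t^{\sigma}/\Gamma(\sigma+1)$.

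It then remains to verify that $\Gamma(\sigma+1)^{-2}\le 2$ uniformly for $\sigma\in[0,1]$, i.e.\ $\Gamma(x)\ge 1/\sqrt{2}$ for $x\in[1,2]$. This is where I should be most careful: the whole point of the lemma is a constant independent of $\alpha$, so the constant has to come from a uniform lower bound on $\Gamma$ on $[1,2]$. Since $\Gamma(1)=\Gamma(2)=1$ and $\Gamma$ attains its minimum on $[1,2]$ at an interior point near $x\approx 1.46$ with value $\approx 0.886$, one has $\Gamma(x)\ge 0.88>1/\sqrt{2}$, which yields the factor $2$ (in fact any constant slightly larger than $1.28$ would do).

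The only potentially delicate step is the application of Young's inequality in the Bochner-valued setting, but it follows immediately from the scalar version by first taking norms inside the convolution as above. Everything else is the standard convolution identity $\omega_{\mu}*\omega_{\nu-\mu}=\omega_{\nu}$ and a harmless numerical estimate on $\Gamma$, so I do not anticipate any genuine obstacle.
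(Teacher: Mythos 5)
Your proof is correct. The paper does not prove this lemma itself but simply cites Lemma~3.1 of Le et al.\ (2018); your argument---reduce to $\mathcal{I}^{\sigma}$ with $\sigma=\nu-\mu$ via the semigroup property $\omega_{\mu}*\omega_{\sigma}=\omega_{\nu}$, take norms inside the Bochner integral, and apply the $L_1$--$L_2$ convolution (Young) bound to get the constant $\Gamma(\sigma+1)^{-2}\le2$---is the standard, self-contained route to exactly this estimate, so there is no substantive difference in approach. The only step resting on a numerical fact is the lower bound $\Gamma(x)\ge0.88$ on $[1,2]$; if you want to avoid quoting the minimum of $\Gamma$, note that log-convexity gives $\Gamma(\tfrac32)^2\le\Gamma(x)\Gamma(3-x)$ while convexity and $\Gamma(1)=\Gamma(2)=1$ give $\Gamma(3-x)\le1$, so $\Gamma(x)\ge\Gamma(\tfrac32)^2=\pi/4>1/\sqrt2$ for $x\in[1,2]$, which yields the factor $2$ cleanly.
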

\begin{proof}
See Le et al.~\cite[Lemma~3.1]{LeMcLeanMustapha2018}.
\end{proof}

\begin{lemma}\label{lem: I mu y}
If $0<\mu\le1$, then for $t>0$~and $\phi\in L_2\bigl((0,t),L_2(\Omega)\bigr)$,
\[
\int_0^t\|(\mathcal{I}^\mu\phi)(s)\|^2\,ds\le2\int_0^t\omega_\mu(t-s)
\int_0^s\bigiprod{\phi(q),(\mathcal{I}^\mu\phi)(q)}\,dq.
\]
\end{lemma}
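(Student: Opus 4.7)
Writing $\psi:=\mathcal{I}^\mu\phi$, the plan is to reduce the lemma to a pointwise-in-time coercivity bound and then integrate twice. The central estimate is
\[
\psi(s,x)^2 \le 2\bigl(\mathcal{I}_s^\mu(\phi\psi)\bigr)(s,x)
\quad\text{for a.e.\ $x\in\Omega$ and $s\in(0,t)$.}
\]
Granting this, I would integrate over $\Omega$ to obtain $\|\psi(s)\|^2\le 2(\mathcal{I}_s^\mu\langle\phi,\psi\rangle)(s)$, then integrate over $s\in(0,t)$ and swap the order of integration, using $\int_q^t\omega_\mu(s-q)\,ds=\omega_{\mu+1}(t-q)$, to arrive at $\int_0^t\|\psi\|^2\,ds\le 2\int_0^t\omega_{\mu+1}(t-q)\langle\phi,\psi\rangle(q)\,dq$. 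A final integration by parts in $q$ rewrites the right-hand side as $2\int_0^t\omega_\mu(t-s)\int_0^s\langle\phi,\psi\rangle(q)\,dq\,ds$, matching the lemma; the boundary terms vanish because $\omega_{\mu+1}(0)=0$ for $\mu>0$ and because $\int_0^0\langle\phi,\psi\rangle\,dq=0$.

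To establish the pointwise bound I would fix $x\in\Omega$ and argue on the scalar function $s\mapsto\psi(s,x)$. For smooth $\phi$ we have $\psi(0,x)=0$, so the Riemann--Liouville and Caputo fractional derivatives of order $\mu$ applied to $\psi$ agree and both equal $\phi(\cdot,x)$. The key is the Alikhanov coercivity inequality $2v\,{}^{\mathrm{C}}\partial_s^\mu v\ge{}^{\mathrm{C}}\partial_s^\mu(v^2)$ applied to $v=\psi(\cdot,x)$: applying $\mathcal{I}_s^\mu$ to both sides and using $\mathcal{I}^\mu\,{}^{\mathrm{C}}\partial^\mu w=w-w(0)$ with $w=v^2$ (and $w(0)=0$) gives exactly the pointwise bound above. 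The Alikhanov inequality itself follows by writing $2v\,{}^{\mathrm{C}}\partial_s^\mu v - {}^{\mathrm{C}}\partial_s^\mu(v^2)$ as $-\int_0^s\omega_{1-\mu}(s-q)\partial_q[(v(s)-v(q))^2]\,dq$ and integrating by parts in $q$; the boundary contribution at $q=s$ vanishes (since $(v(s)-v(q))^2\sim(s-q)^2$ dominates $\omega_{1-\mu}(s-q)\sim(s-q)^{-\mu}$), the contribution at $q=0$ is the non-negative $\omega_{1-\mu}(s)v(s)^2$, and the bulk integrand is non-negative because $\partial_q\omega_{1-\mu}(s-q)\ge 0$.

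The main obstacle is the low regularity: for $\phi\in L_2((0,t),L_2(\Omega))$, the Caputo derivative of $\psi^2$ is only distributional and the Alikhanov manipulation is not literally valid. I would handle this by a density argument, proving the lemma first for $\phi$ smooth and compactly supported in $(0,t)$, so that $\psi$ is smooth with $\psi(0)=0$ and every step is rigorous, and then extending by continuity. The left-hand side of the lemma depends continuously on $\phi$ in $L_2((0,t),L_2(\Omega))$ by Young's convolution inequality applied to $\omega_\mu\in L_1(0,t)$, and the right-hand side depends continuously by Cauchy--Schwarz applied to $\langle\phi,\psi\rangle$ together with the same boundedness of $\mathcal{I}^\mu$, so passage to the limit is standard.
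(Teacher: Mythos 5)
Your proof is correct, and it is necessarily a different route from the paper's, because the paper offers no argument at all for this lemma: it simply defers to \cite{McLeanEtAl2019}. Your self-contained derivation---the pointwise Alikhanov-type coercivity inequality $2v\,{}^{\mathrm{C}}\partial_s^\mu v\ge{}^{\mathrm{C}}\partial_s^\mu(v^2)$ applied for a.e.\ fixed $x$ to $v=\psi(\cdot,x)$ with ${}^{\mathrm{C}}\partial^\mu\psi=\phi$, followed by applying the monotone operator $\mathcal{I}^\mu$ and the identity $\mathcal{I}^\mu\,{}^{\mathrm{C}}\partial^\mu w=w-w(0)$, then integrating over $\Omega$ and in time---is sound, and your final rewriting is in fact an exact identity, $\int_0^t\omega_{\mu+1}(t-q)\iprod{\phi,\psi}(q)\,dq=\int_0^t\omega_\mu(t-s)\int_0^s\iprod{\phi,\psi}(q)\,dq\,ds$, obtainable either by your integration by parts or directly by Fubini. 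The continuity of both sides of the asserted bound with respect to $\phi$ in $L_2\bigl((0,t),L_2(\Omega)\bigr)$ is also argued correctly, so the density step is legitimate. Two small loose ends you should tidy: (i) your kernel manipulation with $\omega_{1-\mu}$ presupposes $0<\mu<1$, so the endpoint $\mu=1$, which the lemma includes, should be dispatched separately---there $\psi^2(s)=2\int_0^s\phi\psi\,dq$ exactly, so the bound is immediate; (ii) the pointwise-in-$x$ argument needs the approximating $\phi$ to be smooth in $t$ for a.e.\ fixed $x$, so choose the dense class accordingly, e.g.\ finite sums $\sum_j c_j(t)w_j(x)$ with $c_j\in C_c^\infty(0,t)$ and $w_j\in L_2(\Omega)$, or functions mollified in time only; with that choice every step of your smooth-case argument (including $\psi'=\mathcal{I}^\mu\phi'$ and the vanishing boundary term at $q=s$) is rigorous. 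What your approach buys is an elementary, fully self-contained proof inside the paper; what the citation buys is brevity.
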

\begin{proof}
See McLean et al.~\cite[Lemma~2.2]{McLeanEtAl2019}.
\end{proof}

\begin{lemma}\label{lem: phi(t)}
Let $0\le\mu<\nu\le1$. If $\phi:[0,T]\to L_2(\Omega)$ is continuous with 
$\phi(0)=0$, and if its restriction to~$(0,T]$ is differentiable 
with~$\|\phi'(t)\|\le Ct^{-\mu}$ for $0<t\le T$, then
\[
\|\phi(t)\|^2\le2\omega_{2-\nu}(t)\int_0^t
    \bigiprod{\phi'(s),(\mathcal{I}^\nu\phi')(s)}\,ds.
\]
\end{lemma}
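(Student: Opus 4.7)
I propose to combine a Cauchy--Schwarz estimate on a convolution representation of $\phi$ with an Alikhanov-style convexity inequality for the fractional derivative of a square.  Since $\phi(0)=0$ and $\phi'$ is integrable on $[0,t]$ (as $\mu<1$), the fundamental theorem of calculus together with the semigroup identity $\mathcal{I}^1=\mathcal{I}^{1-\nu}\mathcal{I}^\nu$ yields
\[
\phi(t)=(\mathcal{I}^1\phi')(t)=(\mathcal{I}^{1-\nu}\psi)(t),\qquad\psi:=\mathcal{I}^\nu\phi',
\]
and in particular $\psi(0)=0$, because the hypothesis $\mu<\nu$ gives $\|\psi(s)\|\le Cs^{\nu-\mu}\to0$.

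Applying Cauchy--Schwarz in $L_2(\Omega)$ to this convolution, splitting the weight as $\omega_{1-\nu}(t-s)^{1/2}\cdot\omega_{1-\nu}(t-s)^{1/2}$, gives
\[
\|\phi(t)\|^2\le\Bigl(\int_0^t\omega_{1-\nu}(t-s)\,ds\Bigr)\int_0^t\omega_{1-\nu}(t-s)\|\psi(s)\|^2\,ds=\omega_{2-\nu}(t)\,(\mathcal{I}^{1-\nu}\|\psi\|^2)(t).
\]
It therefore suffices to establish the Alikhanov-type bound
\[
(\mathcal{I}^{1-\nu}\|\psi\|^2)(t)\le 2\int_0^t\iprod{\phi'(s),\psi(s)}\,ds,
\]
which I would obtain from the pointwise inequality
\[
\partial_s(\mathcal{I}^{1-\nu}\|\psi\|^2)(s)\le 2\bigiprod{\psi(s),\partial_s(\mathcal{I}^{1-\nu}\psi)(s)},
\]
valid because $\psi(0)=0$. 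Since the first step gives $\mathcal{I}^{1-\nu}\psi=\phi$ and hence $\partial_s(\mathcal{I}^{1-\nu}\psi)=\phi'$, integrating from $0$ to $t$ (both sides vanish at $s=0$) produces the required bound and, combined with the Cauchy--Schwarz step, the lemma.

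The hard part is justifying the pointwise Alikhanov inequality in our low-regularity setting: $\psi=\mathcal{I}^\nu\phi'$ can have a singular derivative near $s=0$ when $\phi'$ does, so the usual proof for $C^1$ functions does not apply verbatim.  I would verify it by the standard Alikhanov manipulation, rewriting
\[
2\bigiprod{\psi(s),\partial_s\mathcal{I}^{1-\nu}\psi(s)}-\partial_s(\mathcal{I}^{1-\nu}\|\psi\|^2)(s)
\]
via integration by parts in the kernel $(s-r)^{-\nu}$, and using $\psi(0)=0$, as a sum of non-negative terms involving $\|\psi(s)-\psi(r)\|^2$.  The assumption $\mu<\nu$ is precisely what makes the integrals that appear in these manipulations absolutely convergent, so that every step is legitimate.
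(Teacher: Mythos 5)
Your overall architecture is sound and is essentially the natural route to this estimate (note the paper itself does not reprove the lemma but simply cites McLean et al.\ for it): since $\phi(0)=0$ and $\mu<1$, the factorization $\phi=\mathcal{I}^1\phi'=\mathcal{I}^{1-\nu}\psi$ with $\psi=\mathcal{I}^\nu\phi'$ is legitimate, the weighted Cauchy--Schwarz step $\|\phi(t)\|^2\le\omega_{2-\nu}(t)\,(\mathcal{I}^{1-\nu}\|\psi\|^2)(t)$ is correct for $\nu<1$ (the case $\nu=1$ reduces to the identity $\|\phi(t)\|^2=2\int_0^t\iprod{\phi',\phi}\,ds$), the constants match, and the lemma would indeed follow from the integrated Riemann--Liouville Alikhanov inequality $(\mathcal{I}^{1-\nu}\|\psi\|^2)(t)\le2\int_0^t\iprod{\phi'(s),\psi(s)}\,ds$, which rests on the observation $\partial_s(\mathcal{I}^{1-\nu}\psi)=\phi'$.

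The genuine gap is in the step you yourself single out as the hard part. Under the stated hypotheses, $\phi'$ is only assumed to exist on $(0,T]$ with $\|\phi'(t)\|\le Ct^{-\mu}$; it need not be continuous, let alone H\"older continuous or differentiable. Consequently $\psi=\mathcal{I}^\nu\phi'$ is in general only locally H\"older of exponent $\nu$ (and of exponent $\nu-\mu$ at the origin): $\psi'$ need not exist, and even the left-hand side $\partial_s\bigl(\mathcal{I}^{1-\nu}\|\psi\|^2\bigr)(s)$ of your pointwise inequality need not exist, because the Riemann--Liouville derivative of order $\nu$ of a function that is merely H\"older-$\nu$ can fail to exist. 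The ``standard Alikhanov manipulation'' therefore cannot be run as written: whether you integrate by parts against $\psi'$ or pass to the Marchaud form with differences $\psi(s)-\psi(r)$, the critical singularity sits on the diagonal $r=s$, where a H\"older exponent strictly greater than $\nu$ is needed (with exponent exactly $\nu$ the relevant integral is borderline divergent). The assumption $\mu<\nu$ only controls the behaviour near $s=0$, so your claim that it makes ``every step legitimate'' does not address the actual obstruction. What is missing is a regularization/limiting argument: prove the \emph{integrated} inequality for smooth approximations---mollify $\phi'$, or regularize the kernel $\omega_{1-\nu}$ in the manner of Vergara--Zacher---where the classical computation expressing $2\iprod{\psi,\phi'}-\partial_s(\mathcal{I}^{1-\nu}\|\psi\|^2)$ as a sum of non-negative terms is valid, and then pass to the limit using the domination $\|\phi'(t)\|\le Ct^{-\mu}$, $\mu<\nu$, to control both sides. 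With that supplied your proof closes; without it, the key inequality is asserted rather than proved.
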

\begin{proof}
See McLean et al.~\cite[Lemma~2.3]{McLeanEtAl2019}.
\end{proof}

We will also require the following fractional Gronwall inequality involving
the Mittag-Leffler 
function~$E_\mu(z)=\sum_{n=0}^\infty z^n/\Gamma(1+n\mu)$.

\begin{lemma}\label{lem: Gronwall}
Let $\mu>0$ and assume that $a$~and $b$ are non-negative and non-decreasing 
functions on the interval~$[0,T]$. If $y:[0,T]\to\mathbb{R}$ is an integrable 
function satisfying
\[
0\le y(t)\le a(t)+b(t)\int_0^t\omega_\mu(t-s)y(s)\,ds
        \quad\text{for $0\le t\le T$,}
\]
then
\[
y(t)\le a(t)E_\mu\bigl(b(t)\,t^\mu\bigr)\quad\text{for $0\le t\le T$.}
\]
\end{lemma}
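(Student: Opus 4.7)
The plan is to fix $t$, reduce to constant coefficients via the monotonicity of $a$~and $b$, and then iterate the inequality to expose a Neumann series whose sum is the Mittag--Leffler function.

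Fix $t\in(0,T]$ and set $A=a(t)$, $B=b(t)$. Because $a$~and $b$ are non-decreasing and $y\ge0$, the hypothesis yields
\[
0\le y(s)\le A+B\int_0^s\omega_\mu(s-\tau)y(\tau)\,d\tau
\quad\text{for $0\le s\le t$.}
\]
I would then define the positivity-preserving linear operator $K$ on $L_1(0,t)$ by $(K\phi)(s)=B\int_0^s\omega_\mu(s-\tau)\phi(\tau)\,d\tau$, so that $0\le y\le A+Ky$. Applying this pointwise inequality to itself $n$ times and repeatedly invoking the positivity of~$K$ gives
\[
y(s)\le A\sum_{k=0}^{n-1}(K^k 1)(s)+(K^n y)(s).
\]

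The semigroup identity $\omega_\mu\ast\omega_\nu=\omega_{\mu+\nu}$, which follows immediately from $\hat\omega_\mu(z)=z^{-\mu}$, together with a short induction gives
\[
(K^k 1)(s)=B^k\omega_{k\mu+1}(s)=\frac{(Bs^\mu)^k}{\Gamma(1+k\mu)},
\]
whose sum over $k\ge0$ is precisely $E_\mu(Bs^\mu)$. For the remainder, the same semigroup identity gives $(K^n y)(s)=B^n\int_0^s\omega_{n\mu}(s-\tau)y(\tau)\,d\tau$, so once $n$ is large enough that $n\mu\ge1$ the kernel is bounded by $t^{n\mu-1}/\Gamma(n\mu)$, whence
\[
|(K^n y)(s)|\le\frac{B^n t^{n\mu-1}}{\Gamma(n\mu)}\int_0^t|y(\tau)|\,d\tau\to0
\quad\text{as $n\to\infty$,}
\]
because $\Gamma(n\mu)$ grows faster than any exponential. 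Letting $n\to\infty$ and then setting $s=t$ delivers $y(t)\le a(t)E_\mu\bigl(b(t)t^\mu\bigr)$.

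The only slightly delicate step is the vanishing of $K^ny$, which is where the integrability of~$y$ actually enters; everything else is pure bookkeeping on the Neumann series. A clean alternative, if one prefers to avoid an explicit remainder estimate, is to verify directly from the defining series of~$E_\mu$ that $\psi(s)=AE_\mu(Bs^\mu)$ satisfies the Volterra equation $\psi=A+B\omega_\mu\ast\psi$ with equality, and then deduce $y\le\psi$ on $[0,t]$ by applying the same positivity-preserving iteration to the difference $y-\psi\le K(y-\psi)$.
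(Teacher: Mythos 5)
Your proof is correct. The paper does not prove this lemma itself but simply cites Dixon--McKee and Ye--Gao--Ding, and your argument (freezing $a$ and $b$ at $t$ by monotonicity, iterating the Volterra inequality, summing $K^k1=(b(t)s^\mu)^k/\Gamma(1+k\mu)$ into $E_\mu$, and killing the remainder $K^ny$ via the superexponential growth of $\Gamma(n\mu)$) is exactly the standard proof given in those references, carried out completely and with the integrability of $y$ used in precisely the right place.
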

\begin{proof}
See Dixon and McKee~\cite[Theorem~3.1]{DixonMcKee1986} and
Ye, Gao and Ding~\cite[Corollary~2]{YeGaoDing2007}.
\end{proof}

We recall the definition of the linear operator~$\vB_1$ in~\eqref{eq: B1},
and introduce two other linear operators $\mathcal{M}$~and $\vB_2$, defined by
\[
(\mathcal{M}\phi)(t)=t\,\phi(t)\quad\quad\text{and}
\qquad\vB_2\phi=\bigl(\mathcal{M}\vec{\mathcal{B}}_1\phi\bigr)'.
\]
With the help of the identity
\begin{equation}\label{eq: MI commutator}
\mathcal{M}\mathcal{I}^\alpha-\mathcal{I}^\alpha\mathcal{M}
    =\alpha\mathcal{I}^{\alpha+1}
\end{equation}
and using our assumption that $\vec F$~and $\vec F'$ are bounded, one can show 
the following technical estimates.

\begin{lemma}\label{lem: B1 B2}
Let $\mu>0$.  If $\phi:[0,T]\to L_2(\Omega)$ is continuous, and if its 
restriction to~$(0,T]$ is differentiable with $\|\phi'(t)\|\le Ct^{\mu-1}$ 
for $0<t\le T$, then
\[
\int_0^t\|\vB_1\phi\|^2\,ds\le 
C\int_0^t\|\mathcal{I}^\alpha\phi\|^2\,ds
\]
and
\[
\int_0^t\|\vB_2\phi\|^2\,ds\le C\int_0^t\bigl(
    \|\mathcal{I}^\alpha(\mathcal{M}\phi)'\|^2
    +\|\mathcal{I}^\alpha\phi\|^2\bigr)\,ds.
\]
\end{lemma}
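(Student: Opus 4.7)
The plan is to handle the two estimates in sequence, with the bound for $\vB_1$ feeding into the bound for $\vB_2$.

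For the first estimate, I would unwind the definition $\partial_t^{1-\alpha}=\partial_t\mathcal{I}^\alpha$ and integrate by parts in the time integral that defines $\vB_1\phi$. The hypotheses that $\phi$ is continuous with $\|\phi'(t)\|\le Ct^{\mu-1}$ ensure that $\mathcal{I}^\alpha\phi$ is absolutely continuous on $[0,T]$ with $\mathcal{I}^\alpha\phi(0)=0$, so the boundary term at $s=0$ drops out and I obtain the identity
\[
(\vB_1\phi)(t)=\vec F(t)\,(\mathcal{I}^\alpha\phi)(t)
   -\int_0^t\vec F'(s)\,(\mathcal{I}^\alpha\phi)(s)\,ds.
\]
Taking norms, squaring, applying $(a+b)^2\le 2a^2+2b^2$ and Cauchy--Schwarz on the remaining integral in $s$ (with the finite length $T$), then integrating in $t$ and using $\|\vec F\|_\infty,\|\vec F'\|_\infty\le C$, gives the first bound with constant depending only on $T$ and $\vec F$.

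For the second estimate, I would first observe that $\vB_2\phi=(\mathcal{M}\vB_1\phi)'=\vB_1\phi+t\vec F(t)\,\partial_t(\mathcal{I}^\alpha\phi)(t)$ by the product rule, so the only new object to control is $t\partial_t(\mathcal{I}^\alpha\phi)$. Writing $\psi=\mathcal{I}^\alpha\phi$, the elementary identity $t\psi'(t)=(\mathcal{M}\psi)'(t)-\psi(t)$ combined with the stated commutator~\eqref{eq: MI commutator} yields
\[
\mathcal{M}\mathcal{I}^\alpha\phi=\mathcal{I}^\alpha(\mathcal{M}\phi)
  +\alpha\mathcal{I}^{\alpha+1}\phi.
\]
Differentiating, using $(\mathcal{I}^{\alpha+1}\phi)'=\mathcal{I}^\alpha\phi$, and using that $(\mathcal{M}\phi)(0)=0$ so that the derivative can be moved through $\mathcal{I}^\alpha$ to give $(\mathcal{I}^\alpha\mathcal{M}\phi)'=\mathcal{I}^\alpha(\mathcal{M}\phi)'$, I arrive at
\[
t\,\partial_t(\mathcal{I}^\alpha\phi)(t)
 =\mathcal{I}^\alpha(\mathcal{M}\phi)'(t)-(1-\alpha)\,\mathcal{I}^\alpha\phi(t).
\]
Substituting into the expression for $\vB_2\phi$, taking norms, squaring, integrating, and absorbing $\int_0^t\|\vB_1\phi\|^2\,ds$ via the first estimate, gives the second bound.

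The only delicate point is justifying the manipulation $(\mathcal{I}^\alpha\mathcal{M}\phi)'=\mathcal{I}^\alpha(\mathcal{M}\phi)'$, which is where the regularity assumption $\|\phi'(t)\|\le Ct^{\mu-1}$ really bites: I need $\mathcal{M}\phi$ to vanish at $0$ and $(\mathcal{M}\phi)'(t)=\phi(t)+t\phi'(t)$ to be integrable near $0$, so that both sides of this identity make sense and agree. Beyond that, everything is bookkeeping with the product rule, Cauchy--Schwarz, and boundedness of $\vec F$ and $\vec F'$.
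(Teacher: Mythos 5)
Your proof is correct, but it follows a genuinely different route from the paper's. The paper disposes of \cref{lem: B1 B2} by citation: both bounds are taken from Le et al.~\cite[Lemma~4.1]{LeMcLeanMustapha2018}, where the $\vB_2$ estimate carries an extra term $C\int_0^t\|\mathcal{I}^\alpha\mathcal{M}\phi\|^2\,ds$, which the paper then absorbs into $C\int_0^t\|\mathcal{I}^\alpha\phi\|^2\,ds$ using \cref{lem: I nu I mu} and the commutator identity~\eqref{eq: MI commutator}. You instead give a self-contained derivation: the integration-by-parts identity $(\vB_1\phi)(t)=\vec F\,(\mathcal{I}^\alpha\phi)(t)-\int_0^t\vec F'\,\mathcal{I}^\alpha\phi\,ds$ (the same identity the paper invokes later, in the proof of \cref{lem: div B}) gives the first bound; then writing $\vB_2\phi=(\mathcal{M}\vB_1\phi)'=\vB_1\phi+\vec F\,\mathcal{M}\,\partial_t(\mathcal{I}^\alpha\phi)$ and using \eqref{eq: MI commutator} together with \eqref{eq: D I alpha commutator} to get $t\,\partial_t(\mathcal{I}^\alpha\phi)=\mathcal{I}^\alpha(\mathcal{M}\phi)'-(1-\alpha)\mathcal{I}^\alpha\phi$ — exactly the manipulation the paper performs for $\rhoX$ in the proof of \cref{lem: thetaX pointwise} — yields the second bound after squaring and reusing the $\vB_1$ estimate. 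Your decomposition is in fact equivalent to the formula for $\vB_2\phi$ displayed in the proof of \cref{lem: div B}, where the terms $\vec F'\mathcal{M}\mathcal{I}^\alpha\phi$ and $-\mathcal{M}\vec F'\,\mathcal{I}^\alpha\phi$ cancel; a pleasant consequence of your route is that the extra $\mathcal{I}^\alpha\mathcal{M}\phi$ term never arises, so no absorption step is needed, whereas the citation route buys brevity and offloads the regularity bookkeeping to the earlier paper. Your closing remark correctly identifies where the hypothesis $\|\phi'(t)\|\le Ct^{\mu-1}$ is actually used, namely to justify $(\mathcal{I}^\alpha\mathcal{M}\phi)'=\mathcal{I}^\alpha(\mathcal{M}\phi)'$ via $(\mathcal{M}\phi)(0)=0$ and integrability of $\phi+t\phi'$ near $t=0$ (and likewise to make the integration by parts for $\vB_1$ legitimate), so no gaps remain.
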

\begin{proof}
The estimate for $\vB_1$ was proved by Le et 
al.~\cite[Lemma~4.1]{LeMcLeanMustapha2018}, who also proved the bound for 
$\vB_2$ (denoted there by $B_3$) but with the extra term
term~$C\int_0^t\|\mathcal{I}^\alpha\mathcal{M}\phi\|^2\,ds$.  However, we 
may omit this extra term because it is bounded by 
$Ct^2\int_0^t\|\mathcal{I}^\alpha\phi\|^2\,ds$, as one sees from 
\cref{lem: I nu I mu} and \eqref{eq: MI commutator}.
\end{proof}

\section{Stability analysis}\label{sec: stability}

To simplify the error analysis of \cref{sec: error} we will include an 
additional term on the right-hand side of~\eqref{eq: integrated Galerkin} and 
study the stability of~$\uX:[0,T]\to\mathbb{X}$ satisfying
\begin{equation}\label{eq: gen RHS}
\iprod{\uX,\chi}+\bigiprod{\kappa\mathcal{I}^\alpha\nabla\uX,\nabla\chi}
    -\bigiprod{\vB_1\uX,\nabla\chi}
    =\iprod{f_1,\chi}+\iprod{\vec f_2,\nabla\chi}
\quad\text{for $\chi\in\mathbb{X}$,}
\end{equation}
with $\uX(0)=\uzeroX$; the semidiscrete Galerkin solution is then given by the 
special case $f_1=\fX$~and $\vec f_2=\vec 0$.  Our goal is to bound 
$\|\uX(t)\|$ pointwise in~$t$, and for this purpose our overall strategy is to 
apply \cref{lem: phi(t)} with $\phi=\mathcal{M}\uX$, which in turn 
requires an estimate for~$\int_0^t\bigiprod{\mathcal{I}^\alpha(\mathcal{M}\uX)',
(\mathcal{M}\uX)'}\,ds$.  The technical details are worked out in 
\cref{lem: step 1,lem: step 2} below, and the stability 
estimate itself is then obtained in \cref{lem: uX pointwise} 
for~\eqref{eq: gen RHS}, and in \cref{thm: uX stability} for the 
original semidiscrete Galerkin problem~\eqref{eq: integrated Galerkin}.

Our method of proof begins by multiplying both sides of~\eqref{eq: gen RHS} 
by~$t$ and using the identity~\eqref{eq: MI commutator} to obtain
\[
\iprod{\mathcal{M}\uX,\chi}
+\bigiprod{\kappa\mathcal{I}^\alpha(\mathcal{M}\nabla\uX)
+\alpha\kappa\mathcal{I}^{\alpha+1}\nabla\uX
    -\mathcal{M}\vB_1\uX,\nabla\chi}
=\iprod{\mathcal{M}f_1,\chi}+\iprod{\mathcal{M}\vec f_2,\nabla\chi}.
\]
Differentiating this equation with respect to time then yields
\begin{multline}\label{eq: DM integrated Galerkin}
\bigiprod{(\mathcal{M}\uX)',\chi}
    +\bigiprod{\kappa\partial_t^{1-\alpha}(\mathcal{M}\nabla\uX)
    +\alpha\kappa\mathcal{I}^\alpha\nabla\uX-\vB_2\uX,\nabla\chi}\\
    =\iprod{(\mathcal{M}f_1)',\chi}+\iprod{(\mathcal{M}\vec f_2)',\nabla\chi}.
\end{multline}

\begin{lemma}\label{lem: step 1}
For $i\in\{0,1\}$ and $0\le t\le T$, the solution of~\eqref{eq: gen RHS}
satisfies
\[
\int_0^t\Bigl(\bigl\|\mathcal{I}^\alpha(\mathcal{M}^i\uX)\bigr\|^2+t^\alpha
\bigl\|\mathcal{I}^\alpha(\nabla\mathcal{M}^i\uX)\bigr\|^2\Bigr)\,ds
\le Ct^{2(\alpha+i)}\int_0^t\bigl(\|f_1\|^2+t^{-\alpha}\|\vec f_2\|^2\bigr)\,ds.
\]
\end{lemma}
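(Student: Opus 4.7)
The approach is an energy method designed to produce $\mathcal{I}^\alpha\uX$ and $\mathcal{I}^\alpha\nabla\uX$ on the left-hand side. For $i=0$, my plan is to test~\eqref{eq: gen RHS} with $\chi=\mathcal{I}^\alpha\uX$, which lies in~$\mathbb{X}$ since the subspace is time-independent. Using the fact that $\nabla$ commutes with $\mathcal{I}^\alpha$, the diffusive bilinear form becomes $\bigiprod{\kappa\mathcal{I}^\alpha\nabla\uX,\mathcal{I}^\alpha\nabla\uX}$, which after integration over~$(0,t)$ is bounded below by $\kappa_{\min}\int_0^t\|\mathcal{I}^\alpha\nabla\uX\|^2\,ds$. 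Meanwhile, the positivity property~\eqref{eq: I mu positive} gives $Y(t):=\int_0^t\iprod{\uX,\mathcal{I}^\alpha\uX}\,ds\ge0$.

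The drift term $\int_0^t\bigiprod{\vB_1\uX,\mathcal{I}^\alpha\nabla\uX}\,ds$ and the two forcing terms are to be controlled by Cauchy--Schwarz and Young's inequality, with the gradient factors absorbed into the coercivity contribution and \cref{lem: B1 B2} invoked to dispose of $\int_0^t\|\vB_1\uX\|^2\,ds\le C\int_0^t\|\mathcal{I}^\alpha\uX\|^2\,ds$. \cref{lem: I mu y} then converts the remaining $\int_0^t\|\mathcal{I}^\alpha\uX\|^2\,ds$ into the fractional integral $(\mathcal{I}^\alpha Y)(t)$. With Young parameters scaled by appropriate powers of~$t$, these manipulations yield a Volterra-type inequality of the form
\[
Y(t)+c\int_0^t\|\mathcal{I}^\alpha\nabla\uX\|^2\,ds
\le C(\mathcal{I}^\alpha Y)(t)+Ct^\alpha\int_0^t\|f_1\|^2\,ds+C\int_0^t\|\vec f_2\|^2\,ds,
\]
to which the fractional Gronwall inequality (\cref{lem: Gronwall}) applies. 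A second application of \cref{lem: I mu y}, together with the identity $\mathcal{I}^\alpha 1=t^\alpha/\Gamma(\alpha+1)$, upgrades the pointwise bound on $Y$ to the claimed $t^{2\alpha}$ weight on $\int_0^t\|\mathcal{I}^\alpha\uX\|^2\,ds$ and, via the coercivity, to the corresponding bound on $t^\alpha\int_0^t\|\mathcal{I}^\alpha\nabla\uX\|^2\,ds$.

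For $i=1$, I would multiply~\eqref{eq: gen RHS} by~$t$ and invoke~\eqref{eq: MI commutator} to rewrite $\mathcal{M}\mathcal{I}^\alpha\nabla\uX$ as $\mathcal{I}^\alpha(\mathcal{M}\nabla\uX)+\alpha\mathcal{I}^{\alpha+1}\nabla\uX$, so that $\mathcal{M}\uX$ satisfies an identity of the same structural form as~\eqref{eq: gen RHS} but with a modified drift $\mathcal{M}\vB_1\uX$ and an extra diffusive term $\alpha\kappa\mathcal{I}^{\alpha+1}\nabla\uX$. Testing with $\chi=\mathcal{I}^\alpha(\mathcal{M}\uX)$ and repeating the $i=0$ argument carries through: the extra $\mathcal{I}^{\alpha+1}\nabla\uX$ contribution is tamed using \cref{lem: I nu I mu} (with $\nu=\alpha+1$, $\mu=\alpha$), which costs a factor of~$t$, and the already-established $i=0$ bound on $\int_0^t\|\mathcal{I}^\alpha\nabla\uX\|^2\,ds$ closes the estimate and supplies the extra $t^2$ needed to upgrade $t^{2\alpha}$ to $t^{2(\alpha+1)}$ and $t^\alpha$ to $t^{\alpha+2}$.

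The main obstacle will be the bookkeeping of powers of~$t$ through the repeated Young steps to produce exactly the weights $t^{2(\alpha+i)}$ on $\|f_1\|^2$ and $t^{\alpha+2i}$ on $\|\vec f_2\|^2$, together with ensuring that the constants fed into \cref{lem: Gronwall} remain $\alpha$-independent (which is made possible by the $\alpha$-independent constant in \cref{lem: B1 B2}) so that the Mittag-Leffler factor $E_\alpha(Ct^\alpha)$ stays uniformly bounded for $t\in[0,T]$ and $\alpha\in(0,1]$.
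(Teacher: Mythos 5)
There is a genuine gap at the heart of your $i=0$ argument. The Volterra inequality you claim,
$Y(t)+c\int_0^t\|\mathcal{I}^\alpha\nabla\uX\|^2\,ds\le C(\mathcal{I}^\alpha Y)(t)+Ct^\alpha\int_0^t\|f_1\|^2\,ds+C\int_0^t\|\vec f_2\|^2\,ds$ with $Y(t)=\int_0^t\iprod{\uX,\mathcal{I}^\alpha\uX}\,ds$, cannot be extracted from the single identity obtained by testing \eqref{eq: gen RHS} with $\chi=\mathcal{I}^\alpha\uX$. To put the weight $t^\alpha$ on $\|f_1\|^2$ (which the stated weight $t^{2(\alpha+i)}$ requires) you must use the split $\iprod{f_1,\mathcal{I}^\alpha\uX}\le\tfrac12t^\alpha\|f_1\|^2+\tfrac12t^{-\alpha}\|\mathcal{I}^\alpha\uX\|^2$, and after \cref{lem: I mu y} the memory term then carries the coefficient $Ct^{-\alpha}$, not $C$. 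At that point \cref{lem: Gronwall} does not apply as stated (it needs $b$ non-decreasing), and the natural repairs reintroduce exactly the $\alpha$-dependence the lemma is meant to avoid (iterating the kernel $\omega_\alpha(t-s)s^{-\alpha}$ produces factors $\Gamma(1-\alpha)$, which blow up as $\alpha\to1$). If instead you take an unweighted Young split, the memory coefficient is constant but you only reach the weight $t^{\alpha+2i}$ on the $\|f_1\|^2$ term, which is weaker than the statement (and too weak for the way the lemma is used in \cref{lem: step 2,lem: uX pointwise}). The paper resolves this with a second, independent energy identity that your proposal omits: apply $\mathcal{I}^\alpha$ to \eqref{eq: gen RHS} and test again with $\chi=\mathcal{I}^\alpha\uX$, so that $\|\mathcal{I}^\alpha\uX\|^2$ itself appears on the left while the term $\iprod{\kappa\mathcal{I}^\alpha(\mathcal{I}^\alpha\nabla\uX),\mathcal{I}^\alpha\nabla\uX}$ is discarded after time integration by the positivity \eqref{eq: I mu positive}; the gradient term this creates on the right is controlled by the first identity \eqref{eq: step 1.1}, and the Young parameter is chosen as $\eta\simeq t^{-\alpha}$ so that the reappearing $t^{-\alpha}\int_0^t\|\mathcal{I}^\alpha\uX\|^2\,ds$ is absorbed with coefficient $\tfrac12$. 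Only then does one arrive at \eqref{eq: step 1.3}--\eqref{eq: step 1.4}, a Volterra inequality with a benign, $\alpha$-uniform coefficient to which \cref{lem: Gronwall} applies; your final conversion back to $\int_0^t\|\mathcal{I}^\alpha\uX\|^2\,ds$ via \cref{lem: I mu y} does match the paper's last step.

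A secondary point: for $i=1$ you propose to re-run the whole energy argument for $\mathcal{M}\uX$ (multiplying by $t$, testing with $\mathcal{I}^\alpha(\mathcal{M}\uX)$, handling the modified drift $\mathcal{M}\vB_1\uX$, which \cref{lem: B1 B2} does not cover directly, though $\int_0^t\|\mathcal{M}\vB_1\uX\|^2\,ds\le t^2\int_0^t\|\vB_1\uX\|^2\,ds$ would do). The paper avoids all of this: using \eqref{eq: MI commutator} and \cref{lem: I nu I mu} it shows $\int_0^t\|\mathcal{I}^\alpha\mathcal{M}\uX\|^2\,ds\le6t^2\int_0^t\|\mathcal{I}^\alpha\uX\|^2\,ds$ (and the same for the gradient), so the case $i=1$ follows purely algebraically from $i=0$. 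Your heavier route would in any event inherit the $i=0$ gap described above.
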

\begin{proof}
Choose $\chi=(\mathcal{I}^\alpha\uX)(t)$ in~\eqref{eq: gen RHS} so that
\begin{multline*}
\iprod{\uX,\mathcal{I}^\alpha\uX}+\kappa_{\min}\|\mathcal{I}^\alpha\nabla\uX\|^2
    \le\bigiprod{\vec f_2+\vB_1\uX,\mathcal{I}^\alpha\nabla\uX}
        +\iprod{f_1,\mathcal{I}^\alpha\uX}\\
    \le\kappa_{\min}^{-1}\bigl(\|\vec f_2\|^2+\|\vB_1\uX\|^2\bigr)
    +\tfrac12\kappa_{\min}\|\mathcal{I}^\alpha\nabla\uX\|^2
    +\|f_1\|\|\mathcal{I}^\alpha\uX\|.
\end{multline*}
After cancelling $\tfrac12\kappa_{\min}\|\mathcal{I}^\alpha\nabla\uX\|^2$, 
integrating in time and applying \cref{lem: B1 B2}, we deduce that
\begin{multline}\label{eq: step 1.1}
\int_0^t\Bigl(\iprod{\uX,\mathcal{I}^\alpha\uX}
    +\tfrac12\kappa_{\min}\|\mathcal{I}^\alpha\nabla\uX\|^2\Bigr)\,ds\\
    \le C_0t^{-\alpha}\int_0^t\|\mathcal{I}^\alpha\uX\|^2\,ds
    +\int_0^t\bigl(t^\alpha\|f_1\|^2+\kappa_{\min}^{-1}\|\vec f_2\|^2\bigr)\,ds,
\end{multline}
where $C_0$ is a fixed constant depending on $T$, $\kappa_{\min}$
and $\vec F$. Apply $\mathcal{I}^\alpha$ to both sides of~\eqref{eq: gen RHS}
and choose~$\chi=\mathcal{I}^\alpha \uX(t)$ to obtain, for any~$\eta>0$,
\begin{multline*}
\|\mathcal{I}^\alpha\uX\|^2
+\iprod{\kappa\mathcal{I}^\alpha(\mathcal{I}^\alpha\nabla \uX),
	\mathcal{I}^\alpha\nabla \uX}
=\iprod{\mathcal{I}^\alpha(\vec f_2+\vB_1\uX),\mathcal{I}^\alpha\nabla\uX}
        +\iprod{\mathcal{I}^\alpha f_1,\mathcal{I}^\alpha\uX}\\
\le\eta\bigl(\|\mathcal{I}^\alpha\vec f_2\|^2
    +\|\mathcal{I}^\alpha(\vB_1\uX)\|^2\bigr)
+\tfrac12\eta^{-1}\|\mathcal{I}^\alpha\nabla\uX\|^2
+\tfrac12\|\mathcal{I}^\alpha f_1\|^2+\tfrac12\|\mathcal{I}^\alpha\uX\|^2.
\end{multline*}
Simplifying, integrating in time, and noting 
$\int_0^t\iprod{\kappa\mathcal{I}^\alpha(\mathcal{I}^\alpha\nabla\uX),
\mathcal{I}^\alpha\nabla\uX}\,ds\ge0$ by~\eqref{eq: I mu positive},
we observe that
\[
\int_0^t\|\mathcal{I}^\alpha\uX\|^2\,ds
	\le\int_0^t\|\mathcal{I}^\alpha f_1\|^2\,ds
+2\eta\int_0^t\bigl(\|\mathcal{I}^\alpha\vec f_2\|^2
    +\|\mathcal{I}^\alpha(\vB_1\uX)\|^2\bigr)\,ds
	+\frac{1}{\eta}\int_0^t\|\mathcal{I}^\alpha \nabla \uX\|^2\,ds.
\]
By \cref{lem: I nu I mu,lem: B1 B2},  
\[
\int_0^t \|\mathcal{I}^\alpha(\vB_1\uX)\|^2\,ds
    \le 2t^{2\alpha}\int_0^t\|\vB_1\uX\|^2\,ds 
\le Ct^{2\alpha}\int_0^t\|\mathcal{I}^\alpha\uX\|^2\,ds,
\]
and so, again with the help of \cref{lem: I nu I mu}, 
\begin{multline}\label{eq: step 1.2}
\int_0^t\|\mathcal{I}^\alpha \uX\|^2\,ds
    \le2t^{2\alpha}\int_0^t\bigl(\|f_1\|^2+2\eta\|\vec f_2\|^2\bigr)\,ds
    +C\eta t^{2\alpha}\int_0^t\|\mathcal{I}^\alpha\uX\|^2\,ds\\
    +\frac{1}{\eta}\int_0^t\|\mathcal{I}^\alpha\nabla\uX\|^2\,ds.
\end{multline}
However, by~\eqref{eq: step 1.1}, 
\[
\int_0^t\|\mathcal{I}^\alpha\nabla \uX\|^2\,ds
    \le\frac{2C_0}{\kappa_{\min}}\,t^{-\alpha}
    \int_0^t\|\mathcal{I}^\alpha\uX\|^2\,ds
    +C\int_0^t\bigl(t^\alpha\|f_1\|^2+\|\vec f_2\|^2\bigr)\,ds,
\]
and by \cref{lem: I mu y}, 
\begin{equation}\label{eq: step 1.2.1}
\int_0^t\|\mathcal{I}^\alpha\uX\|^2\,ds
    \le 2\int_0^t\omega_\alpha(t-s)\int_0^s
    \bigiprod{\uX(q),(\mathcal{I}^\alpha\uX)(q)}\,dq\,ds\,.
\end{equation}
Inserting these two estimates in the right-hand side of~\eqref{eq: step 1.2} 
and choosing $\eta=4C_0t^{-\alpha}/\kappa_{\min}$ yields 
\begin{equation}\label{eq: step 1.3}
\int_0^t\|\mathcal{I}^\alpha \uX\|^2\,ds
    \le Ct^\alpha\int_0^t\bigl(t^\alpha\|f_1\|^2+\|\vec f_2\|^2\bigr)\,ds
    +Ct^\alpha\int_0^t\omega_\alpha(t-s)\int_0^s
        \bigiprod{\uX,\mathcal{I}^\alpha\uX}\,dq\,ds.
\end{equation}
Let
$y(t)=\int_0^t\bigl(\iprod{\uX,\mathcal{I}^\alpha\uX}
+\|\mathcal{I}^\alpha\nabla \uX\|^2\bigr)\,ds$,
and deduce from \eqref{eq: step 1.1}~and \eqref{eq: step 1.3} that
\begin{equation}\label{eq: step 1.4}
y(t)\le C\int_0^t\bigl(t^\alpha\|f_1\|^2+\|\vec f_2\|^2\bigr)\,ds
        +C\int_0^t\omega_\alpha(t-s)y(s)\,ds.
\end{equation}
It follows by \cref{lem: Gronwall} that, for $0\le t\le T$,
\begin{equation}\label{eq: step 1.5}
y(t)\le CE_\alpha\bigl(Ct^\alpha\bigr)\int_0^t
    \bigl(t^\alpha\|f_1\|^2+\|\vec f_2\|^2\bigr)\,ds
    \le C\int_0^t\bigl(t^\alpha\|f_1\|^2+\|\vec f_2\|^2\bigr)\,ds.
\end{equation}
Together, \eqref{eq: step 1.2.1}~and \eqref{eq: step 1.5} imply
\[
\int_0^t\Bigl(\|\mathcal{I}^\alpha\uX\|^2
    +t^\alpha\|\mathcal{I}^\alpha\nabla\uX\|^2\Bigr)\,ds
    \le Ct^\alpha\int_0^t\bigl(t^\alpha\|f_1\|^2+\|\vec f_2\|^2\bigr)\,ds\\
    +Ct^\alpha\int_0^t\omega_\alpha(t-s)y(s)\,ds,
\]
and using \eqref{eq: step 1.5} a second time gives
\begin{multline*}
\int_0^t\omega_\alpha(t-s)y(s)\,ds\le C
    \biggl(\int_0^t\omega_\alpha(t-s)\,ds\biggr)
    \int_0^t\bigl(t^\alpha\|f_1\|^2+\|\vec f_2\|^2\bigr)\,ds\\
    \le Ct^\alpha\int_0^t\bigl(t^\alpha\|f_1\|^2+\|\vec f_2\|^2\bigr)\,ds,
\end{multline*}
completing the proof for the case~$i=0$.

The identity~\eqref{eq: MI commutator} implies that
\[
\|(\mathcal{I}^\alpha\mathcal{M}\uX)(t)\|^2
\le 2t^2\|(\mathcal{I}^\alpha\uX)(t)\|^2
+2\alpha^2\|(\mathcal{I}^{\alpha+1}\uX)(t)\|^2
\]
so, using \cref{lem: I nu I mu},
\begin{align*}
\int_0^t \|\mathcal{I}^\alpha\mathcal{M}\uX\|^2\,ds 
    &\le2t^2\int_0^t\|\mathcal{I}^\alpha\uX\|^2\,ds  
    +4\alpha^2 t^2\int_0^t\|\mathcal{I}^\alpha\uX\|^2\,ds
    \le 6t^2\int_0^t\|\mathcal{I}^\alpha\uX\|^2\,ds.
\end{align*}
Similarly, 
\[
\int_0^t\|\mathcal{I}^\alpha\mathcal{M}\nabla\uX\|^2\,ds 
\le 6t^2\int_0^t\|\mathcal{I}^\alpha \nabla\uX\|^2\,ds,
\]
so the case~$i=1$ follows from the already proven case~$i=0$.
\end{proof}

The next lemma makes use of the identity
\begin{equation}\label{eq:  D I alpha commutator}
(\partial_t^{1-\alpha}\phi)(t)=(\mathcal{I}^\alpha\phi)'(t)
    =\phi(0)\omega_\alpha(t)+(\mathcal{I}^\alpha\phi')(t).
\end{equation}

\begin{lemma}\label{lem: step 2}
For $0\le t\le T$, the solution of~\eqref{eq: gen RHS} satisfies
\begin{multline*}
\int_0^t\Bigl(\bigiprod{(\mathcal{M}\uX)',\mathcal{I}^\alpha(\mathcal{M}\uX)'}
        +\|\mathcal{I}^\alpha(\mathcal{M}\nabla\uX)'\|^2\Bigr)\,ds\\
        \le Ct^\alpha\int_0^t\bigl(\|f_1\|^2+\|(\mathcal{M}f_1)'\|^2\bigr)\,ds
        +C\int_0^t\bigl(\|\vec f_2\|^2+\|(\mathcal{M}\vec f_2)'\|^2\bigr)\,ds.
\end{multline*}
\end{lemma}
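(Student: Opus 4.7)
The plan is to mimic \cref{lem: step 1}: test \eqref{eq: DM integrated Galerkin} with the admissible choice $\chi=\mathcal{I}^\alpha(\mathcal{M}\uX)'(t)\in\mathbb{X}$, integrate over $[0,t]$, and close via the fractional Gronwall inequality (\cref{lem: Gronwall}). Since $(\mathcal{M}\nabla\uX)(0)=0$, identity \eqref{eq:  D I alpha commutator} reduces $\partial_t^{1-\alpha}(\mathcal{M}\nabla\uX)$ to $\mathcal{I}^\alpha(\mathcal{M}\nabla\uX)'$; combined with the commutativity of $\nabla$, $\mathcal{M}$, $\mathcal{I}^\alpha$ and the coercivity of $\kappa$, this produces on the left the two non-negative quantities appearing on the left-hand side of the statement. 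I write $Y_1(t)$ and $Y_2(t)$ for these quantities and $Y(t)=Y_1(t)+Y_2(t)$.

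For the three source terms tested against $\mathcal{I}^\alpha(\mathcal{M}\nabla\uX)'$---those involving $\alpha\kappa\mathcal{I}^\alpha\nabla\uX$, $\vB_2\uX$, and $(\mathcal{M}\vec f_2)'$---I would apply Cauchy--Schwarz and Young's inequality with small fixed constants to extract absorbable fractions of $Y_2$. \cref{lem: step 1} (case $i=0$) bounds $\int_0^t\|\mathcal{I}^\alpha\nabla\uX\|^2\,ds$, while \cref{lem: B1 B2} splits $\int_0^t\|\vB_2\uX\|^2\,ds$ into $\int_0^t\|\mathcal{I}^\alpha(\mathcal{M}\uX)'\|^2\,ds$ (converted by \cref{lem: I mu y} into the Gronwall convolution $C\int_0^t\omega_\alpha(t-s)Y_1(s)\,ds$) and $\int_0^t\|\mathcal{I}^\alpha\uX\|^2\,ds$ (again bounded by \cref{lem: step 1}).

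The main obstacle is the remaining term $\int_0^t\iprod{(\mathcal{M}f_1)',\mathcal{I}^\alpha(\mathcal{M}\uX)'}\,ds$: the factor $t^\alpha$ multiplying $\int_0^t\|(\mathcal{M}f_1)'\|^2\,ds$ in the statement is essential (when $\vec f_2\equiv\vec 0$, the left-hand side behaves like $t^{1+\alpha}$ as $t\to 0^+$), so a naive Young's inequality producing only $C\int_0^t\|(\mathcal{M}f_1)'\|^2\,ds$ would be strictly weaker than the claim. The resolution is to introduce $M(t):=\sup_{s\in[0,t]}Y(s)$, which is non-decreasing with $M(0)=0$. Cauchy--Schwarz in time, \cref{lem: I mu y}, and the pointwise estimate $Y_1(s)\le M(t)$ give $\int_0^t\|\mathcal{I}^\alpha(\mathcal{M}\uX)'\|^2\,ds\le\frac{2t^\alpha}{\Gamma(1+\alpha)}M(t)$; Young's inequality with $\epsilon$ proportional to $t^\alpha$ then yields $\bigl|\int_0^t\iprod{(\mathcal{M}f_1)',\mathcal{I}^\alpha(\mathcal{M}\uX)'}\,ds\bigr|\le\tfrac14 M(t)+Ct^\alpha\int_0^t\|(\mathcal{M}f_1)'\|^2\,ds$.

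Collecting the estimates produces the pointwise inequality $Y(t)\le A(t)+\tfrac14 M(t)+Ct^\alpha\int_0^t\|(\mathcal{M}f_1)'\|^2\,ds+C\int_0^t\omega_\alpha(t-s)Y_1(s)\,ds$, where $A(t)$ is the source-only part of the target right-hand side and is non-decreasing in $t$. Taking the supremum over $[0,t]$ (using $Y_1(s)\le M(s)$ together with the monotonicity of $s\mapsto\int_0^s\omega_\alpha(s-q)M(q)\,dq$) and absorbing $\tfrac14 M(t)$ reduces the inequality to $M(t)\le C\bigl(A(t)+t^\alpha\int_0^t\|(\mathcal{M}f_1)'\|^2\,ds\bigr)+C\int_0^t\omega_\alpha(t-s)M(s)\,ds$. \cref{lem: Gronwall} then delivers the stated bound for $M(t)$, and hence for $Y(t)\le M(t)$, once we invoke the $\alpha$-uniform estimate $E_\alpha(CT^\alpha)\le C$ already used in the proof of \cref{lem: step 1}.
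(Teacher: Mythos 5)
Your proof is correct, but it takes a genuinely different route through the one delicate point of this lemma. Both arguments start identically: test \eqref{eq: DM rewrite} with $\chi=\mathcal{I}^\alpha(\mathcal{M}\uX)'$, absorb the terms paired with $\nabla\chi$ into the coercive $\kappa$-term, and dispose of $\int_0^t\|\mathcal{I}^\alpha\nabla\uX\|^2\,ds$, $\int_0^t\|\mathcal{I}^\alpha\uX\|^2\,ds$ and $\int_0^t\|\vB_2\uX\|^2\,ds$ via \cref{lem: step 1,lem: B1 B2}, with \cref{lem: I mu y} converting the surviving $\int_0^t\|\mathcal{I}^\alpha(\mathcal{M}\uX)'\|^2\,ds$ into a Gronwall convolution. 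You correctly identify the crux as the term $\int_0^t\iprod{(\mathcal{M}f_1)',\mathcal{I}^\alpha(\mathcal{M}\uX)'}\,ds$, where the weight $t^\alpha$ on $\int_0^t\|(\mathcal{M}f_1)'\|^2\,ds$ must be preserved. The paper handles it by a weighted Young inequality (which leaves a coefficient $C_0t^{-\alpha}$ on $\int_0^t\|\mathcal{I}^\alpha(\mathcal{M}\uX)'\|^2\,ds$ in \eqref{eq: step 2.1}) and then performs a \emph{second} energy argument --- applying $\mathcal{I}^\alpha$ to \eqref{eq: DM rewrite}, testing with the same $\chi$, and choosing $\eta=2C_0t^{-\alpha}$ --- to bound that quantity with the correct $t$-weights before closing with \cref{lem: I mu y} and \cref{lem: Gronwall} applied to $y(t)$. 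You instead avoid the second testing altogether: Cauchy--Schwarz in time plus the a priori bound $\int_0^t\|\mathcal{I}^\alpha(\mathcal{M}\uX)'\|^2\,ds\le 2\omega_{1+\alpha}(t)M(t)$ (from \cref{lem: I mu y} and $Y_1\ge0$, which follows from \eqref{eq: I mu positive}) lets you split the $f_1$ term as a small multiple of the running supremum $M(t)=\sup_{[0,t]}Y$ plus $Ct^\alpha\int_0^t\|(\mathcal{M}f_1)'\|^2\,ds$, after which a Gronwall inequality for $M$ (legitimate, since all right-hand terms are non-decreasing once $Y_1(s)\le M(s)$ is inserted) finishes the proof. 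Your version is shorter and dispenses with the re-testing of the $\mathcal{I}^\alpha$-integrated equation; its price is working with $M(t)$, which requires the qualitative finiteness/integrability of $M$ to justify the absorption step (no worse than what the paper implicitly assumes when applying \cref{lem: Gronwall} to $y$), and a small bookkeeping point: after the $\kappa$-coercivity normalization the left side controls $Y_1+\kappa_{\min}Y_2$, so the fraction of $M(t)$ you extract from Young's inequality must be taken small relative to $\min(1,\kappa_{\min})$ rather than the fixed $\tfrac14$ --- a trivial adjustment of the Young parameter.
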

\begin{proof}
Rewriting \eqref{eq: DM integrated Galerkin} as
\begin{equation}\label{eq: DM rewrite}
\bigiprod{(\mathcal{M}\uX)',\chi}
    +\bigiprod{\kappa\partial_t^{1-\alpha}(\mathcal{M}\nabla\uX),\nabla\chi}
=\bigiprod{(\mathcal{M}\vec f_2)'+\vB_2\uX
-\alpha\kappa\mathcal{I}^\alpha\nabla\uX,\nabla\chi}
+\iprod{(\mathcal{M}f_1)',\chi},
\end{equation}
we note that the first term on the right is bounded by
\[
\tfrac12\kappa_{\min}\|\nabla\chi\|^2
    +\tfrac{3}{2}\kappa_{\min}^{-1}\bigl(\|(\mathcal{M}\vec f_2)'\|^2
    +\|\vB_2\uX\|^2+\alpha^2\|\kappa\mathcal{I}^\alpha\nabla\uX\|^2\bigr).
\]
Choose $\chi=\partial_t^{1-\alpha}(\mathcal{M}\uX)(t)
=\bigl(\mathcal{I}^\alpha(\mathcal{M}\uX)\bigr)'(t)$ and note that,
since $(\mathcal{M}\uX)(0)=0$, the identity \eqref{eq: D I alpha commutator} 
implies $\chi=\mathcal{I}^\alpha(\mathcal{M}\uX)'$ so
\begin{multline*}
\iprod{(\mathcal{M}\uX)',\mathcal{I}^\alpha(\mathcal{M}\uX)'}
        +\tfrac12\kappa_{\min}\|\mathcal{I}^\alpha(\mathcal{M}\nabla\uX)'\|^2\\
        \le\|(\mathcal{M}f_1)'\|\,\|\mathcal{I}^\alpha(\mathcal{M}\uX)'\|
    +C\|(\mathcal{M}\vec f_2)'\|^2+C\|\vB_2\uX\|^2
    +C\|\mathcal{I}^\alpha\nabla\uX\|^2.
\end{multline*}
Thus, by \cref{lem: B1 B2},
\begin{align*}
y(t)&\equiv\int_0^t\Bigl(
\bigiprod{(\mathcal{M}\uX)',\mathcal{I}^\alpha(\mathcal{M}\uX)'}
        +\|\mathcal{I}^\alpha(\mathcal{M}\nabla\uX)'\|^2\Bigr)\,ds\\
&\le C\int_0^t\bigl(
    t^\alpha\|(\mathcal{M}f_1)'\|^2+\|(\mathcal{M}\vec f_2)'\|^2\bigr)\,ds \\
&\qquad{}+C\int_0^t\bigl(\|\mathcal{I}^\alpha(\nabla\uX)\|^2
    +\|\mathcal{I}^\alpha\uX\|^2\bigr)\,ds
    +C_0t^{-\alpha}\int_0^t\|\mathcal{I}^\alpha(\mathcal{M}\uX)'\|^2\,ds.
\end{align*}
The second integral on the right is bounded
by~$Ct^\alpha\int_0^t\bigl(\|f_1\|^2+t^{-\alpha}\|\vec f_2\|^2\bigr)\,ds$ via 
\cref{lem: step 1}, giving
\begin{multline}\label{eq: step 2.1}
y(t)\le Ct^\alpha\int_0^t\bigl(\|f_1\|^2+\|(\mathcal{M}f_1)'\|^2\bigr)\,ds
    +C\int_0^t\bigl(\|\vec f_2\|^2+\|(\mathcal{M}\vec f_2)'\|^2\bigr)\,ds\\
    +C_0t^{-\alpha}\int_0^t\|\mathcal{I}^\alpha(\mathcal{M}\uX)'\|^2\,ds.
\end{multline}
Now apply $\mathcal{I}^\alpha$ to both sides 
of~\eqref{eq: DM rewrite}, again with
$\chi=\partial_t^{1-\alpha}(\mathcal{M}\uX)(t)
=\mathcal{I}^\alpha(\mathcal{M}\uX)'(t)$, to conclude that
\begin{align*}
\|\mathcal{I}^\alpha(\mathcal{M}\uX)'\|^2&+\bigiprod{\kappa
\mathcal{I}^\alpha\bigl(\mathcal{I}^\alpha(\mathcal{M}\nabla\uX)'\bigr),
\mathcal{I}^\alpha(\mathcal{M}\nabla\uX)'}\\
&\le\bigl(\|\mathcal{I}^\alpha(\mathcal{M}\vec f_2)'\|
    +\|\mathcal{I}^\alpha(\vB_2\uX)\|+\|\mathcal{I}^{2\alpha}\nabla\uX\|\bigr)
\|\mathcal{I}^\alpha(\mathcal{M}\nabla\uX)'\|\\
    &\qquad{}+\tfrac12\|\mathcal{I}^\alpha(\mathcal{M}f_1)'\|^2
    +\tfrac12\|\mathcal{I}^\alpha(\mathcal{M}\uX)'\|^2.
\end{align*}
After cancelling the last term on the right we have, for any~$\eta>0$,
\begin{multline*}
\|\mathcal{I}^\alpha(\mathcal{M}\uX)'\|^2+2\bigiprod{\kappa
\mathcal{I}^\alpha\bigl(\mathcal{I}^\alpha(\mathcal{M}\nabla\uX)'\bigr),
\mathcal{I}^\alpha(\mathcal{M}\nabla\uX)'}\\
\le3\eta\bigl(\|\mathcal{I}^\alpha(\mathcal{M}\vec f_2)'\|^2
    +\|\mathcal{I}^\alpha(\vB_2\uX)^2\|
    +\|\mathcal{I}^{2\alpha}\nabla\uX\|^2\bigr)
+\eta^{-1}\|\mathcal{I}^\alpha(\mathcal{M}\nabla\uX)'\|^2
    +\|\mathcal{I}^\alpha(\mathcal{M}f_1)'\|^2.
\end{multline*}
Since the integral over~$(0,t)$ of the second term on the left is non-negative, 
it follows using \cref{lem: I nu I mu} that
 \begin{multline*}
\int_0^t\|\mathcal{I}^\alpha(\mathcal{M}\uX)'\|^2\,ds
        \le 6\eta t^{2\alpha}\int_0^t\Bigl(\|(\mathcal{M}\vec f_2)'\|^2
    +\|\vB_2\uX\|^2 
    +\|\mathcal{I}^\alpha\nabla\uX\|^2\Bigr)\,ds\\
    +\eta^{-1}\int_0^t
        \|\mathcal{I}^\alpha(\mathcal{M}\nabla\uX)'\|^2\,ds
        +2t^{2\alpha}\int_0^t\|(\mathcal{M}f_1)'\|^2\,ds.
\end{multline*}
By \cref{lem: B1 B2,lem: step 1},
\begin{align*}
\int_0^t\Bigl(\|\vB_2\uX\|^2
    &+\|\mathcal{I}^\alpha(\nabla\uX)\|^2\Bigr)\,ds
\le Ct^\alpha\int_0^t\bigl(\|f_1\|^2+t^{-\alpha}\|\vec f_2\|^2\bigr)\,ds\\
    &\qquad{}+C\int_0^t\Bigl(\|\mathcal{I}^\alpha(\mathcal{M}\uX)'\|^2
    +\|\mathcal{I}^\alpha\uX\|^2\Bigr)\,ds\\
&\le C\int_0^t\|\mathcal{I}^\alpha(\mathcal{M}\uX)'\|^2\,ds
+C\bigl(t^\alpha+t^{2\alpha}\bigr)
    \int_0^t\bigl(\|f_1\|^2+t^{-\alpha}\|\vec f_2\|^2\bigr)\,ds,
\end{align*}
and consequently,
\begin{multline*}
C_0t^{-\alpha}\int_0^t\|\mathcal{I}^\alpha(\mathcal{M} \uX)'\|^2\,ds
    \le C t^\alpha\int_0^t\bigl(
        \eta t^\alpha\|f_1\|^2+\|(\mathcal{M}f_1)'\|^2\bigr)\,ds\\
+C\eta t^\alpha \int_0^t\bigl(
    \|\vec f_2\|^2+\|(\mathcal{M}\vec f_2)'\|^2\bigr)\,ds
    +C\eta t^\alpha\int_0^t\|\mathcal{I}^\alpha(\mathcal{M}\uX)'\|^2\,ds
+\frac{C_0t^{-\alpha}}{\eta}
        \int_0^t\|\mathcal{I}^\alpha(\mathcal{M} \nabla\uX)'\|^2\,ds.
\end{multline*}
Choosing $\eta=2C_0t^{-\alpha}$, we see from~\eqref{eq: step 2.1} that
\begin{multline*}
y(t)\le Ct^\alpha\int_0^t\bigl(\|f_1\|^2+\|(\mathcal{M}f_1)'\|^2\bigr)\,ds
    +C\int_0^t\bigl(\|\vec f_2\|^2+\|(\mathcal{M}\vec f_2)'\|^2\bigr)\,ds\\
    +C\int_0^t\|\mathcal{I}^\alpha(\mathcal{M}\uX)'\|^2\,ds.
\end{multline*}
The desired estimate follows after applying \cref{lem: I mu y} to bound 
the last integral on the right in terms of~$y$, and then applying 
\cref{lem: Gronwall}.
\end{proof}

\begin{lemma}\label{lem: uX pointwise}
For $0<t\le T$, the solution of~\eqref{eq: gen RHS} satisfies
\[
\|\uX(t)\|^2\le\frac{C}{t}\int_0^t\bigl(
    \|f_1\|^2+\|(\mathcal{M}f_1)'\|^2\bigr)\,ds
+\frac{C}{t^{1+\alpha}}\int_0^t\bigl(
    \|\vec f_2\|^2+\|(\mathcal{M}\vec f_2)'\|^2\bigr)\,ds.
\]
\end{lemma}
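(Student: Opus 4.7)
The plan is to apply \cref{lem: phi(t)} with $\phi=\mathcal{M}\uX$ and $\nu=\alpha$, and then use \cref{lem: step 2} to control the resulting fractional-integral expression. Since $(\mathcal{M}\uX)(0)=0\cdot\uX(0)=0$, the key hypothesis of \cref{lem: phi(t)} concerning $\phi(0)$ is automatic; I would need only to verify the pointwise growth condition $\|(\mathcal{M}\uX)'(t)\|\le Ct^{-\mu}$ for some $\mu<\alpha$, which is available since $\uX$ lives in a finite-dimensional space and~\eqref{eq: gen RHS} shows that $\uX'$ is integrable with at worst a $t^{\alpha-1}$ singularity near the origin, so that $(\mathcal{M}\uX)'=\uX+t\uX'$ has a harmless integrable singularity.

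Granted that, \cref{lem: phi(t)} with $\nu=\alpha$ gives
\[
t^2\|\uX(t)\|^2=\|\mathcal{M}\uX(t)\|^2
    \le 2\omega_{2-\alpha}(t)\int_0^t
    \bigiprod{(\mathcal{M}\uX)'(s),\mathcal{I}^\alpha(\mathcal{M}\uX)'(s)}\,ds.
\]
Since $\omega_{2-\alpha}(t)=t^{1-\alpha}/\Gamma(2-\alpha)\le Ct^{1-\alpha}$ for $0<t\le T$ (uniformly in $\alpha\in(0,1]$, as $\Gamma(2-\alpha)$ stays bounded below), the right-hand side is at most $Ct^{1-\alpha}$ times the integral appearing on the left-hand side of \cref{lem: step 2}.

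Invoking \cref{lem: step 2} and dropping the (non-negative) $\|\mathcal{I}^\alpha(\mathcal{M}\nabla\uX)'\|^2$ term yields
\[
t^2\|\uX(t)\|^2\le Ct^{1-\alpha}\biggl(
    t^\alpha\!\int_0^t\!\bigl(\|f_1\|^2+\|(\mathcal{M}f_1)'\|^2\bigr)ds
    +\!\int_0^t\!\bigl(\|\vec f_2\|^2+\|(\mathcal{M}\vec f_2)'\|^2\bigr)ds\biggr).
\]
Dividing through by $t^2$ produces precisely the two prefactors $1/t$ (from the $f_1$ terms, where $t^{1-\alpha}\cdot t^\alpha/t^2=1/t$) and $1/t^{1+\alpha}$ (from the $\vec f_2$ terms, where $t^{1-\alpha}/t^2=1/t^{1+\alpha}$), giving the claimed estimate.

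The main potential obstacle is the verification of the pointwise hypothesis of \cref{lem: phi(t)} for $\phi=\mathcal{M}\uX$; once this is in hand, the proof is a two-line combination of \cref{lem: phi(t),lem: step 2} together with the explicit form of $\omega_{2-\alpha}(t)$. The bookkeeping that makes the whole argument worthwhile is the careful tracking of the $t^\alpha$ prefactor in \cref{lem: step 2}, which is exactly what produces the different powers of $t$ attached to the $f_1$ and $\vec f_2$ contributions and, crucially, keeps all constants $\alpha$-uniform since every intermediate constant in \cref{lem: step 1,lem: step 2} has already been arranged to be independent of $\alpha$.
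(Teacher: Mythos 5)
Your proposal is correct and is essentially identical to the paper's own proof: apply \cref{lem: phi(t)} with $\phi=\mathcal{M}\uX$, insert the explicit factor $\omega_{2-\alpha}(t)=t^{1-\alpha}/\Gamma(2-\alpha)$, invoke \cref{lem: step 2}, and divide by $t^2$. Your extra remark verifying the hypothesis $\|(\mathcal{M}\uX)'(t)\|\le Ct^{-\mu}$ with $\mu=0<\alpha$ is a sensible addition that the paper leaves implicit.
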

\begin{proof}
Apply \cref{lem: phi(t)}, with $\phi=\mathcal{M}\uX$, followed by 
\cref{lem: step 2}, to conclude that
\begin{align*}
t^2\|\uX(t)\|^2&=\|\mathcal{M}\uX(t)\|^2
    \le\frac{2t^{1-\alpha}}{\Gamma(2-\alpha)}
\int_0^t\bigiprod{\mathcal{I}^\alpha(\mathcal{M}\uX)',(\mathcal{M}\uX)'}\,ds\\
&\le Ct\int_0^t\bigl(\|f_1\|^2+\|(\mathcal{M}f_1)'\|^2\bigr)\,ds
    +Ct^{1-\alpha}\int_0^t\bigl(
    \|\vec f_2\|^2+\|(\mathcal{M}\vec f_2)'\|^2\bigr)\,ds,
\end{align*}
and then divide by~$t^2$. 
\end{proof}

\begin{theorem}\label{thm: uX stability}
The semidiscrete Galerkin solution, defined by~\eqref{eq: integrated Galerkin}, 
satisfies
\[
\|\uX(t)\|\le C\biggl(\|\uzeroX\|+\int_0^t\|g(s)\|\,ds\biggr)
    +C\biggl(\frac{1}{t}\int_0^t\|sg(s)\|^2\,ds\biggr)^{1/2}
\]
for $0<t\le T$, where the stability constant~$C$ depends on $T$, $\Omega$, 
$\kappa$~and $\vec F$, but not on $\alpha\in(0,1]$~or the subspace~$\mathbb{X}$.
\end{theorem}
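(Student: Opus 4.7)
The plan is to specialise Lemma~\ref{lem: uX pointwise} to the original Galerkin problem, which amounts to setting $f_1=\fX$ and $\vec f_2=\vec 0$ in the augmented formulation~\eqref{eq: gen RHS}. Once this identification is made, the theorem reduces to estimating the two data quantities $\int_0^t\|\fX\|^2\,ds$ and $\int_0^t\|(\mathcal{M}\fX)'\|^2\,ds$ in terms of $\|\uzeroX\|$ and $g$, and then packaging the result back into the sum-of-two-terms form in the statement.

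First, using $\fX(s)=\uzeroX+\int_0^s g(r)\,dr$ and the triangle inequality, I obtain the monotone bound $\|\fX(s)\|\le\|\uzeroX\|+\int_0^t\|g(r)\|\,dr$ for every $s\in[0,t]$, and hence
\[
\frac{1}{t}\int_0^t\|\fX\|^2\,ds\le\biggl(\|\uzeroX\|+\int_0^t\|g\|\,dr\biggr)^{\!2}.
\]
Next, since $(\mathcal{M}\fX)(s)=s\fX(s)$, differentiation yields $(\mathcal{M}\fX)'(s)=\fX(s)+s\fX'(s)=\fX(s)+sg(s)$, so after squaring and using $(a+b)^2\le 2a^2+2b^2$,
\[
\frac{1}{t}\int_0^t\|(\mathcal{M}\fX)'\|^2\,ds
\le 2\biggl(\|\uzeroX\|+\int_0^t\|g\|\,dr\biggr)^{\!2}+\frac{2}{t}\int_0^t\|sg(s)\|^2\,ds.
\]

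Inserting these two estimates into Lemma~\ref{lem: uX pointwise} with $\vec f_2\equiv\vec 0$ gives
\[
\|\uX(t)\|^2\le C\biggl(\|\uzeroX\|+\int_0^t\|g\|\,dr\biggr)^{\!2}
+\frac{C}{t}\int_0^t\|sg(s)\|^2\,ds,
\]
and taking square roots via $\sqrt{a+b}\le\sqrt{a}+\sqrt{b}$ yields exactly the two summands of the theorem. Uniformity in $\alpha\in(0,1]$ and in the subspace $\mathbb{X}$ is inherited from Lemma~\ref{lem: uX pointwise}, since the constants there already have the required structure.

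I do not expect a genuine obstacle at this stage: the work has been absorbed into Lemmas~\ref{lem: step 1}, \ref{lem: step 2}, and~\ref{lem: uX pointwise}. The only points warranting care are the pointwise identity $(\mathcal{M}\fX)'=\fX+\mathcal{M}g$ (valid under the mild integrability assumed on $g$) and the verification that the hypotheses of Lemma~\ref{lem: phi(t)} apply with $\phi=\mathcal{M}\uX$, namely $\mathcal{M}\uX(0)=0$ and the appropriate differentiability of $\uX$ on $(0,T]$; both are standard for the finite-dimensional Galerkin system.
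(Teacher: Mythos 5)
Your proposal is correct and follows the paper's own argument exactly: apply \cref{lem: uX pointwise} with $f_1=\fX$, $\vec f_2=\vec 0$, bound $\tfrac1t\int_0^t\|\fX\|^2\,ds$ by $\bigl(\|\uzeroX\|+\int_0^t\|g\|\,ds\bigr)^2$, and use the identity $(\mathcal{M}\fX)'=\fX+\mathcal{M}g$ before taking square roots. The only difference is that you spell out the elementary inequalities the paper leaves implicit.
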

\begin{proof}
Apply \cref{lem: uX pointwise} with $f_1=\fX$~and $\vec f_2=\vec 0$, 
noting that
\[
\frac{1}{t}\int_0^t\|\fX\|^2\,ds\le\max_{0\le s\le t}\|\fX(s)\|^2
\le\biggl(\|\uzeroX\|+\int_0^t\|g(s)\|\,ds\biggr)^2
\]
and $(\mathcal{M}\fX)'=\fX+\mathcal{M}g$.
\end{proof}

The terms in~$g$ from the above estimate can be bounded as follows.  In 
particular, by choosing $\eta=1$ we see that 
$\|\uX(t)\|\le C\|\uzeroX\|+Ct^{1/2}\|g\|_{L_2((0,T);L_2(\Omega))}$.

\begin{lemma}\label{lem: g alternative}
For $0<t\le T$~and $0<\eta\le1$, 
\[
\biggl(\int_0^t\|g(s)\|\,ds\biggr)^2+\frac{1}{t}\int_0^t\|sg(s)\|^2\,ds
    \le(1+\eta^{-1})t^\eta\int_0^t s^{1-\eta}\|g(s)\|^2\,ds.
\]
\end{lemma}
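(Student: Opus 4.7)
The plan is to bound the two terms on the left separately, each by $t^\eta\int_0^t s^{1-\eta}\|g(s)\|^2\,ds$ up to a constant, with the Cauchy--Schwarz constant $\eta^{-1}$ arising on the first and a plain factor of $1$ on the second; adding them gives exactly the claimed coefficient $1+\eta^{-1}$.

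First I would handle $\bigl(\int_0^t\|g(s)\|\,ds\bigr)^2$ by splitting the integrand as $\|g(s)\|=s^{(1-\eta)/2}\|g(s)\|\cdot s^{-(1-\eta)/2}$ and applying the Cauchy--Schwarz inequality to obtain
\[
\biggl(\int_0^t\|g(s)\|\,ds\biggr)^2
\le\biggl(\int_0^t s^{\eta-1}\,ds\biggr)
    \int_0^t s^{1-\eta}\|g(s)\|^2\,ds
=\frac{t^\eta}{\eta}\int_0^t s^{1-\eta}\|g(s)\|^2\,ds,
\]
which uses $\eta>0$ so that the weight $s^{\eta-1}$ is integrable on $(0,t)$.

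Next I would handle $\tfrac1t\int_0^t\|sg(s)\|^2\,ds$ by writing $s^2=s^{1+\eta}\cdot s^{1-\eta}$ and using the monotone bound $s^{1+\eta}\le t^{1+\eta}$ on $(0,t)$; pulling the supremum out and cancelling one factor of $t$ gives
\[
\frac{1}{t}\int_0^t s^2\|g(s)\|^2\,ds
\le\frac{t^{1+\eta}}{t}\int_0^t s^{1-\eta}\|g(s)\|^2\,ds
=t^\eta\int_0^t s^{1-\eta}\|g(s)\|^2\,ds.
\]
Adding the two displays yields the stated inequality with constant $\eta^{-1}+1=1+\eta^{-1}$. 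There is no real obstacle in the argument; the only point to watch is that the Cauchy--Schwarz weight $s^{\eta-1}$ requires $\eta>0$, which is in the hypotheses, and the hypothesis $\eta\le 1$ is not actually needed for the estimate itself (it is presumably included to match the range in which the bound is applied after \cref{thm: uX stability}).
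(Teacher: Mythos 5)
Your proposal is correct and follows essentially the same route as the paper: Cauchy--Schwarz with the weight split $s^{-(1-\eta)/2}\cdot s^{(1-\eta)/2}$ for the first term (giving the $\eta^{-1}$), and the elementary pointwise bound $s^2/t\le t^\eta s^{1-\eta}$ for $0<s\le t$ on the second term (the paper passes through $s^2/t\le s\le t^\eta s^{1-\eta}$, which is the same estimate in a slightly different order). Nothing further is needed.
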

\begin{proof}
Using the Cauchy--Schwarz inequality,
\begin{align*}
\biggl(\int_0^t\|g(s)\|\,ds\biggr)^2&=
\biggl(\int_0^ts^{-(1-\eta)/2}s^{(1-\eta)/2}\|g(s)\|\,ds\biggr)^2\\
&\le\int_0^ts^{\eta-1}\,ds\int_0^ts^{1-\eta}\|g(s)\|^2\,ds
=\frac{t^\eta}{\eta}\int_0^t s^{1-\eta}\|g(s)\|^2\,ds,
\end{align*}
and furthermore,
\[
\frac{1}{t}\int_0^t\|sg(s)\|^2\,ds
    \le\int_0^t s\|g(s)\|^2\,ds
    \le t^\eta\int_0^ts^{1-\eta}\|g(s)\|^2\,ds.
\]
\end{proof}

\section{Gradient bounds}\label{sec: gradient}

A strategy similar to the one used in \cref{sec: stability} will allow 
us to bound~$\|\nabla\uX(t)\|$ pointwise in~$t$: we once again apply 
\cref{lem: phi(t)}, this time with $\phi=\mathcal{M}\nabla\uX$.  The key 
result is stated as \cref{lem: grad uX pointwise} for the generalized 
problem~\eqref{eq: gen RHS}, and as \cref{thm: grad uX} for the 
semidiscrete Galerkin equation~\eqref{eq: integrated Galerkin}.  The proofs 
rely on the following estimates; cf.~\cref{lem: B1 B2}.

\begin{lemma}\label{lem: div B}
Let $\mu>0$. If $\phi:[0,T]\to H^1(\Omega)$ is continuous, and if its 
restriction to~$(0,T]$ is differentiable with 
$\|\phi'(t)\|_{H^1(\Omega)}\le Ct^{\mu-1}$ for~$0<t\le T$, then
\[
\int_0^t\|\nabla\cdot(\vB_1\phi)\|^2\,ds\le C\int_0^t\bigl(
    \|\mathcal{I}^\alpha\phi\|^2+\|\mathcal{I}^\alpha\nabla\phi\|^2\bigr)\,ds
\]
and
\[
\int_0^t\|\nabla\cdot(\vB_2\phi)\|^2\,ds\le C\int_0^t\bigl(
    \|\mathcal{I}^\alpha\phi\|^2+\|\mathcal{I}^\alpha(\mathcal{M}\phi)'\|^2
    +\|\mathcal{I}^\alpha\nabla\phi\|^2
    +\|\mathcal{I}^\alpha(\mathcal{M}\nabla\phi)'\|^2\bigr)\,ds.
\]
\end{lemma}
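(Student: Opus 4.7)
The plan is to mirror the proof of~\cref{lem: B1 B2}, first integrating by parts in time to replace~$\partial_t^{1-\alpha}\phi$ by~$\mathcal{I}^\alpha\phi$, and then invoking the product rule $\nabla\cdot(\vec F\phi)=(\nabla\cdot\vec F)\phi+\vec F\cdot\nabla\phi$. The product rule is what forces both $\mathcal{I}^\alpha\phi$~and $\mathcal{I}^\alpha\nabla\phi$ to appear on the right-hand side, and is also why the hypothesis strengthens $\|\phi'(t)\|\le Ct^{\mu-1}$ to $\|\phi'(t)\|_{H^1(\Omega)}\le Ct^{\mu-1}$, so that $\nabla\phi$ inherits the same decay needed to invoke the analogue of \cref{lem: B1 B2}.

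For the first inequality, the identity $\partial_t^{1-\alpha}\phi=(\mathcal{I}^\alpha\phi)'$ and integration by parts in $s$ give
\[
\vB_1\phi(t)=\vec F(t)(\mathcal{I}^\alpha\phi)(t)
    -\int_0^t\vec F'(s)(\mathcal{I}^\alpha\phi)(s)\,ds,
\]
where the boundary term at $s=0$ vanishes because $\|(\mathcal{I}^\alpha\phi)(t)\|_{H^1(\Omega)}\le\omega_{\alpha+1}(t)\sup_{0\le s\le t}\|\phi(s)\|_{H^1(\Omega)}\to0$ as $t\to0$. Applying $\nabla\cdot$ and using $\nabla\mathcal{I}^\alpha=\mathcal{I}^\alpha\nabla$ yields
\[
\nabla\cdot\vB_1\phi=(\nabla\cdot\vec F)\mathcal{I}^\alpha\phi
    +\vec F\cdot\mathcal{I}^\alpha\nabla\phi
    -\int_0^t\bigl\{(\nabla\cdot\vec F')\mathcal{I}^\alpha\phi
    +\vec F'\cdot\mathcal{I}^\alpha\nabla\phi\bigr\}\,ds.
\]
The first bound then follows from the triangle inequality, boundedness of $\vec F,\vec F',\nabla\cdot\vec F,\nabla\cdot\vec F'$, a Cauchy--Schwarz estimate on the tail (whose square contributes at most $t\int_0^t(\cdots)\,ds$, leaving only a bounded prefactor after integration over $(0,T)$), and integration over~$(0,t)$.

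For the second inequality, decompose
\[
\vB_2\phi=(\mathcal{M}\vB_1\phi)'=\vB_1\phi+\mathcal{M}(\vB_1\phi)'
    =\vB_1\phi+\vec F\,\mathcal{M}\partial_t^{1-\alpha}\phi,
\]
and combine \eqref{eq: MI commutator} with the identity $(\mathcal{M}\mathcal{I}^\alpha\phi)'=\mathcal{I}^\alpha\phi+\mathcal{M}\partial_t^{1-\alpha}\phi$ to deduce
\[
\mathcal{M}\partial_t^{1-\alpha}\phi
    =(\mathcal{I}^\alpha\mathcal{M}\phi)'+(\alpha-1)\mathcal{I}^\alpha\phi
    =\mathcal{I}^\alpha(\mathcal{M}\phi)'+(\alpha-1)\mathcal{I}^\alpha\phi,
\]
the last equality following from $(\mathcal{M}\phi)(0)=0$ applied in~\eqref{eq: D I alpha commutator}. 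Taking $\nabla\cdot$ and expanding with the product rule then expresses $\nabla\cdot\vB_2\phi$ as $\nabla\cdot\vB_1\phi$ plus four terms of the form $(\nabla\cdot\vec F)\mathcal{I}^\alpha(\cdot)$ or $\vec F\cdot\mathcal{I}^\alpha(\cdot)$ in which $(\cdot)$ ranges over $\phi$, $\nabla\phi$, $(\mathcal{M}\phi)'$~and $(\mathcal{M}\nabla\phi)'$. Squaring, integrating, and adding the first bound yields the second.

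The only genuine obstacle is the algebraic bookkeeping that shifts the time derivative inside~$\mathcal{I}^\alpha$ when rewriting $\mathcal{M}\partial_t^{1-\alpha}\phi$; it is at this step, and only this step, that the factor~$(\alpha-1)$ naturally appears and is then harmlessly absorbed into~$C$. Everything else is a routine combination of the product rule with the scalar techniques already developed in~\cref{lem: B1 B2}.
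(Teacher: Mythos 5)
Your argument is correct and follows essentially the same route as the paper: integration by parts in time to write $\vB_1\phi=\vec F\,\mathcal{I}^\alpha\phi-\int_0^t\vec F'\,\mathcal{I}^\alpha\phi\,ds$, the product rule $\nabla\cdot(\vec F\psi)=(\nabla\cdot\vec F)\psi+\vec F\cdot\nabla\psi$, and the commutator identity \eqref{eq: MI commutator} to handle $\vB_2$. The only difference is bookkeeping: you decompose $\vB_2\phi=\vB_1\phi+\vec F\bigl(\mathcal{I}^\alpha(\mathcal{M}\phi)'+(\alpha-1)\mathcal{I}^\alpha\phi\bigr)$ and reuse the first estimate, whereas the paper expands $\vB_2\phi$ fully into terms involving $\vec F$ and $\vec F'$; the two expressions coincide after simplification, so the proofs are equivalent.
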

\begin{proof}
Integration by parts gives
$(\vB_1\phi)(t)=\vec F\,(\mathcal{I}^\alpha\phi)(t)
-\int_0^t\vec F'\,\mathcal{I}^\alpha\phi\,ds$, so
\[
\nabla\cdot(\vB_1\phi)=(\nabla\cdot\vec F)(\mathcal{I}^\alpha\phi)
    +\vec F\cdot(\mathcal{I}^\alpha\nabla\phi)
    -\int_0^t\Bigl((\nabla\cdot\vec F)'(\mathcal{I}^\alpha\phi)
    +\vec F'\cdot(\mathcal{I}^\alpha\nabla\phi)\Bigr)\,ds,
\]
implying the first estimate.  Furthermore,
\[
\vB_2\phi=\vec F'\bigl(\mathcal{I}^\alpha\mathcal{M}\phi
    +\alpha\mathcal{I}^{\alpha+1}\phi\bigr)
    +\vec F\bigl(\mathcal{I}^\alpha(\mathcal{M}\phi)'
    +\alpha\mathcal{I}^\alpha\phi\bigr)
    -\mathcal{I}^1\bigl(\vec F'\,\mathcal{I}^\alpha\phi\bigr)
    -\mathcal{M}\vec F'\,\mathcal{I}^\alpha\phi,
\]
which implies the second estimate.
\end{proof}

The next result builds on the estimates of \cref{lem: step 1,lem: step 2}.

\begin{lemma}\label{lem: grad step}
The solution of~\eqref{eq: gen RHS} satisfies, for $0\le t\le T$,
\begin{multline*} 
\int_0^t\bigl(\|(\mathcal{M}\uX)'\|^2
+\bigiprod{\kappa\mathcal{I}^\alpha(\mathcal{M}\nabla\uX)',
    (\mathcal{M}\nabla\uX)'}\bigr)\,ds\le C\int_0^t\|\uX\|^2\,ds\\
+C\int_0^t\Bigl(\|f_1\|^2+\|(\mathcal{M}f_1)'\|^2
    +\|\vec f_2\|^2+\|(\mathcal{M}\vec f_2)'\|^2+\|\nabla\cdot\vec f_2\|^2
    +\|(\mathcal{M}\nabla\cdot\vec f_2)'\|^2\Bigr)\,ds.
\end{multline*}
\end{lemma}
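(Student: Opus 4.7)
The plan is to test the differentiated equation~\eqref{eq: DM integrated Galerkin} with $\chi=(\mathcal{M}\uX)'\in\mathbb{X}\subset H^1_0(\Omega)$. Since $(\mathcal{M}\uX)(0)=0$, the identity~\eqref{eq: D I alpha commutator} gives $\partial_t^{1-\alpha}(\mathcal{M}\nabla\uX)=\mathcal{I}^\alpha(\mathcal{M}\nabla\uX)'$, so the left-hand side produces precisely the target quantity $\|(\mathcal{M}\uX)'\|^2+\bigiprod{\kappa\mathcal{I}^\alpha(\mathcal{M}\nabla\uX)',(\mathcal{M}\nabla\uX)'}$. The main obstacle is the term $\alpha\bigiprod{\kappa\mathcal{I}^\alpha\nabla\uX,(\mathcal{M}\nabla\uX)'}$ produced on the right: it cannot be absorbed into the weighted form on the left (which controls only an $\mathcal{I}^\alpha$-smoothed quantity, not the full $\|(\mathcal{M}\nabla\uX)'\|^2$), and integration by parts in space would demand more regularity than $\uX\in\mathbb{X}$ affords.

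To eliminate this troublesome term I will also test the undifferentiated equation~\eqref{eq: gen RHS} with the same $\chi=(\mathcal{M}\uX)'$ and subtract $\alpha$ times that identity from the one derived from~\eqref{eq: DM integrated Galerkin}. The $\alpha\bigiprod{\kappa\mathcal{I}^\alpha\nabla\uX,(\mathcal{M}\nabla\uX)'}$ contribution cancels exactly, leaving
\[
\|(\mathcal{M}\uX)'\|^2+\bigiprod{\kappa\mathcal{I}^\alpha(\mathcal{M}\nabla\uX)',(\mathcal{M}\nabla\uX)'}
=\alpha\iprod{\uX,(\mathcal{M}\uX)'}
+\bigiprod{\vB_2\uX-\alpha\vB_1\uX,(\mathcal{M}\nabla\uX)'}
+\iprod{(\mathcal{M}f_1)'-\alpha f_1,(\mathcal{M}\uX)'}
+\bigiprod{(\mathcal{M}\vec f_2)'-\alpha\vec f_2,(\mathcal{M}\nabla\uX)'}.
\]
Every remaining right-hand inner product with a gradient came from $\nabla\chi$, so integrating by parts in space (using $\chi\in H^1_0$) converts $\vB_2\uX-\alpha\vB_1\uX$ and $(\mathcal{M}\vec f_2)'-\alpha\vec f_2$ into their divergences paired with $(\mathcal{M}\uX)'$. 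Young's inequality with a small parameter then absorbs a fraction of $\|(\mathcal{M}\uX)'\|^2$ back into the left-hand side, leaving on the right only multiples of $\|\uX\|^2$, $\|f_1\|^2$, $\|(\mathcal{M}f_1)'\|^2$, $\|\nabla\cdot\vec f_2\|^2$, $\|(\mathcal{M}\nabla\cdot\vec f_2)'\|^2$, and $\|\nabla\cdot\vB_1\uX\|^2+\|\nabla\cdot\vB_2\uX\|^2$.

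I finish by integrating in time over $(0,t)$ and invoking \cref{lem: div B} to control the divergences of $\vB_1\uX$ and $\vB_2\uX$ by a time integral of $\|\mathcal{I}^\alpha\uX\|^2$, $\|\mathcal{I}^\alpha\nabla\uX\|^2$, $\|\mathcal{I}^\alpha(\mathcal{M}\uX)'\|^2$ and $\|\mathcal{I}^\alpha(\mathcal{M}\nabla\uX)'\|^2$. The first two summands are bounded by \cref{lem: step 1} with $i=0$, and the last two by \cref{lem: step 2}, using \cref{lem: I mu y} to pass from the positive form $\bigiprod{(\mathcal{M}\uX)',\mathcal{I}^\alpha(\mathcal{M}\uX)'}$ to $\|\mathcal{I}^\alpha(\mathcal{M}\uX)'\|^2$. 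The $t^\alpha$ prefactors that surface from these lemmas are absorbed into the generic constant since $t\le T$, yielding the claimed bound, with the $\int_0^t\|\uX\|^2\,ds$ term surviving from the Young's estimate of $\alpha\iprod{\uX,(\mathcal{M}\uX)'}$.
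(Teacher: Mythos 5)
Your proposal is correct and follows essentially the same route as the paper: eliminating the $\alpha\kappa\mathcal{I}^\alpha\nabla\uX$ term by invoking the undifferentiated equation~\eqref{eq: gen RHS} with the same test function $\chi=(\mathcal{M}\uX)'$ (the paper substitutes rather than subtracts, which is the same manipulation), integrating by parts in space to pass to divergences, applying Young's inequality, and then controlling the resulting terms via \cref{lem: div B}, \cref{lem: step 1}, \cref{lem: step 2} and \cref{lem: I mu y}, with $\int_0^t\|\uX\|^2\,ds$ retained on the right. No gaps of substance.
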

\begin{proof}
Using the first Green identity, we deduce from~\eqref{eq: DM rewrite} that
\begin{multline*}
\bigiprod{(\mathcal{M}\uX)',\chi}
    +\bigiprod{\kappa(\mathcal{I}^\alpha\mathcal{M}\nabla\uX)',\nabla\chi}
    =\bigiprod{(\mathcal{M}f_1)'-(\mathcal{M}\nabla\cdot\vec f_2)'
        -\nabla\cdot(\vB_2\uX),\chi}\\
-\alpha\bigiprod{\kappa\mathcal{I}^\alpha\nabla\uX,\nabla\chi},
\end{multline*}
and from \eqref{eq: gen RHS} that
$\bigiprod{\kappa\mathcal{I}^\alpha\nabla\uX,\nabla\chi}
=\bigiprod{f_1-\nabla\cdot(\vB_1\uX)-\nabla\cdot\vec f_2-\uX,\chi}$, so
\begin{multline*}
\bigiprod{(\mathcal{M}\uX)',\chi}
    +\bigiprod{\kappa(\mathcal{I}^\alpha\mathcal{M}\nabla\uX)',\nabla\chi}
    =\bigiprod{f_3+\alpha\nabla\cdot(\vB_1\uX)-\nabla\cdot(\vB_2\uX)
    +\alpha\uX,\chi}\\
    \le\tfrac12\|\chi\|^2+2\bigl(\|f_3\|^2+\|\nabla\cdot(\vB_1\uX)\|^2
    +\|\nabla\cdot(\vB_2\uX)\|^2+\|\uX\|^2\bigr),
\end{multline*}
where $f_3=(\mathcal{M}f_1)'-\alpha f_1-(\mathcal{M}\nabla\cdot\vec f_2)'
+\alpha\nabla\cdot\vec f_2$. Choose $\chi=(\mathcal{M}\uX)'$, cancel the 
term $\tfrac12\|\chi\|^2$, and integrate in time to obtain
\begin{equation}\label{eq: grad step A}
\int_0^t\bigl(\|(\mathcal{M}\uX)'\|^2 
    +\bigiprod{\kappa\mathcal{I}^\alpha(\mathcal{M}\nabla\uX)',
    (\mathcal{M}\nabla\uX)'}
    \bigr)\,ds\le CJ(t)+C\int_0^t\|f_3\|^2\,ds
\end{equation}
where, by \cref{lem: div B},
\[
J(t)=\int_0^t\Bigl(\|\mathcal{I}^\alpha\uX\|^2 
    +\|\mathcal{I}^\alpha\nabla\uX\|^2
    +\|\mathcal{I}^\alpha(\mathcal{M}\uX)'\|^2
    +\|\mathcal{I}^\alpha(\mathcal{M}\nabla\uX)'\|^2+\|\uX\|^2\Bigr)\,ds.
\]
If we let $y(t)=\int_0^t\bigiprod{(\mathcal{M}\uX)', 
\mathcal{I}^\alpha(\mathcal{M}\uX)'}\,ds$ then, by \cref{lem: I mu y},
\[
\int_0^t\|\mathcal{I}^\alpha(\mathcal{M}\uX)'\|^2\,ds
    \le 2\int_0^t\omega_\alpha(t-s)y(s)\,ds
    \le 2\omega_{\alpha+1}(t)\max_{0\le s\le t}y(s),
\]
and so, using \cref{lem: step 1,lem: step 2}, 
\[
J(t)\le Ct^\alpha\int_0^t\bigl(\|f_1\|^2+\|(\mathcal{M}f_1)'\|^2\bigr)\,ds
    +C\int_0^t\bigl(\|\vec f_2\|^2+\|(\mathcal{M}\vec f_2)'\|^2\bigr)\,ds.
\]
Since
\[
\int_0^t\|f_3\|^2\,ds\le 4\int_0^t(\|f_1\|^2+\|(\mathcal{M}f_1)'\|^2
+\|\nabla\cdot\vec f_2\|^2+\|(\mathcal{M}\nabla\cdot\vec f_2)'\|^2)\,ds,
\]
the desired estimate now follows from~\eqref{eq: grad step A}.
\end{proof}

\begin{lemma}\label{lem: grad uX pointwise}
For $0<t\le T$,
\begin{multline*}
t^\alpha\|\nabla\uX(t)\|^2\le\frac{C}{t}\int_0^t\bigl(
    \|f_1\|^2+\|(\mathcal{M}f_1)'\|^2\bigr)\,ds\\
    +\frac{C}{t}\int_0^t\bigl(\|\vec f_2\|^2+\|(\mathcal{M}\vec f_2)'\|^2
    +\|\nabla\cdot\vec f_2\|^2 
    +\|(\mathcal{M}\nabla\cdot\vec f_2)'\|^2\bigr)\,ds.
\end{multline*}
\end{lemma}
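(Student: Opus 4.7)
The plan is to bound $\|\nabla\uX(t)\|$ pointwise by applying \cref{lem: phi(t)} to $\phi=\sqrt{\kappa}\,\mathcal{M}\nabla\uX$ (componentwise, with $\nu=\alpha$), use \cref{lem: grad step} to control the resulting time-integrated energy, and then dispose of the leftover $\int_0^t\|\uX\|^2\,ds$ through a direct energy test in~\eqref{eq: gen RHS}. Under the standard regularity on $(\mathcal{M}\nabla\uX)'$ needed to invoke \cref{lem: phi(t)}, and noting that $\phi(0)=0$, $\phi'=\sqrt{\kappa}(\mathcal{M}\nabla\uX)'$ and $\mathcal{I}^\alpha\phi'=\sqrt{\kappa}\,\mathcal{I}^\alpha(\mathcal{M}\nabla\uX)'$ because $\kappa$ depends only on $\vec x$, I obtain
\[
\kappa_{\min}\,t^2\|\nabla\uX(t)\|^2
\le\|\sqrt{\kappa}\,\mathcal{M}\nabla\uX(t)\|^2
\le 2\omega_{2-\alpha}(t)\,I_\kappa(t),
\qquad
I_\kappa(t):=\int_0^t\bigiprod{\kappa\mathcal{I}^\alpha(\mathcal{M}\nabla\uX)',(\mathcal{M}\nabla\uX)'}\,ds.
\]
Dividing by $\kappa_{\min}t^2$ and multiplying by $t^\alpha$ gives $t^\alpha\|\nabla\uX(t)\|^2\le(C/t)\,I_\kappa(t)$, with $C$ uniform on $\alpha\in(0,1]$ since $\Gamma(2-\alpha)$ is bounded below there. \cref{lem: grad step} then controls $I_\kappa(t)$ by $C\int_0^t\|\uX\|^2\,ds$ plus exactly the data integrals on the right-hand side of the claim, reducing the whole proof to absorbing $\int_0^t\|\uX\|^2\,ds$ into the data.

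The main obstacle is that integrating the pointwise bound of \cref{lem: uX pointwise} in time introduces either a $\ln(t/\tau)$ factor from the $1/s$ weight on $f_1$ or a $1/\alpha$ factor from the $1/s^{1+\alpha}$ weight on $\vec f_2$, either of which would break $\alpha$-uniformity. To avoid this I would instead test~\eqref{eq: gen RHS} with $\chi=\uX$; because $\uX\in H_0^1(\Omega)$, integration by parts rewrites
\[
\iprod{\vec f_2+\vB_1\uX,\nabla\uX}
=-\iprod{\nabla\cdot\vec f_2+\nabla\cdot(\vB_1\uX),\uX},
\]
and Young's inequality then produces the elliptic-type pointwise estimate
\[
\tfrac12\|\uX\|^2+\bigiprod{\kappa\mathcal{I}^\alpha\nabla\uX,\nabla\uX}
\le C\bigl(\|f_1\|^2+\|\nabla\cdot\vec f_2\|^2+\|\nabla\cdot(\vB_1\uX)\|^2\bigr).
\]

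Integrating from $0$ to $t$ and discarding the non-negative term $\int_0^t\bigiprod{\kappa\mathcal{I}^\alpha\nabla\uX,\nabla\uX}\,ds\ge0$, which follows by applying~\eqref{eq: I mu positive} to $\sqrt{\kappa}\,\nabla\uX$ (legal because $\kappa$ is time-independent), I arrive at
\[
\int_0^t\|\uX\|^2\,ds
\le C\int_0^t\bigl(\|f_1\|^2+\|\nabla\cdot\vec f_2\|^2\bigr)\,ds
+C\int_0^t\|\nabla\cdot(\vB_1\uX)\|^2\,ds.
\]
\cref{lem: div B} controls the last integral by $C\int_0^t(\|\mathcal{I}^\alpha\uX\|^2+\|\mathcal{I}^\alpha\nabla\uX\|^2)\,ds$, and \cref{lem: step 1} with $i=0$ bounds this by $C\int_0^t(\|f_1\|^2+\|\vec f_2\|^2)\,ds$, uniformly on $\alpha\in(0,1]$ since the prefactors $t^\alpha$ and $t^{2\alpha}$ remain harmless on $[0,T]$. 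Combining gives $\int_0^t\|\uX\|^2\,ds\le C\int_0^t(\|f_1\|^2+\|\vec f_2\|^2+\|\nabla\cdot\vec f_2\|^2)\,ds$; substituting into the $I_\kappa(t)$ bound from \cref{lem: grad step} and then into $t^\alpha\|\nabla\uX(t)\|^2\le(C/t)\,I_\kappa(t)$ produces exactly the claimed inequality.
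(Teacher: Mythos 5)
Your proposal is correct and follows essentially the same route as the paper: apply \cref{lem: phi(t)} to $\mathcal{M}\nabla\uX$, invoke \cref{lem: grad step}, and then dispose of $\int_0^t\|\uX\|^2\,ds$ by testing~\eqref{eq: gen RHS} with $\chi=\uX$, integrating by parts, using the positivity~\eqref{eq: I mu positive}, and bounding $\|\nabla\cdot(\vB_1\uX)\|$ via \cref{lem: div B} and \cref{lem: step 1}. Your only deviation is the harmless (indeed slightly tidier) choice $\phi=\sqrt{\kappa}\,\mathcal{M}\nabla\uX$, which makes explicit the passage from the unweighted to the $\kappa$-weighted bilinear form that the paper handles implicitly.
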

\begin{proof}
Since $t^\alpha\|\nabla\uX(t)\|^2=t^{\alpha-2}\|\mathcal{M}\nabla\uX(t)\|^2$,
\cref{lem: phi(t),lem: grad step} imply that
\begin{multline*}
t^\alpha\|\nabla\uX(t)\|^2
    \le\frac{2\omega_{2-\alpha}(t)}{t^{2-\alpha}}\int_0^t
\bigiprod{\mathcal{I}^\alpha(\mathcal{M}\nabla\uX)',
    (\mathcal{M}\nabla\uX)'}\,ds\le\frac{C}{t}\int_0^t\|\uX\|^2\,ds\\
+\frac{C}{t}\int_0^t\Bigl(\|f_1\|^2+\|(\mathcal{M}f_1)'\|^2
    +\|\vec f_2\|^2+\|(\mathcal{M}\vec f_2)'\|^2+\|\nabla\cdot\vec f_2\|^2
    +\|(\mathcal{M}\nabla\cdot\vec f_2)'\|^2\Bigr)\,ds,
\end{multline*}
and it suffices to estimate $\int_0^t\|\uX\|^2\,ds$.  Choose 
$\chi=\uX(t)$ in~\eqref{eq: gen RHS} and use the first Green identity to deduce 
that
\begin{align*}
\|\uX\|^2+\bigiprod{\kappa\mathcal{I}^\alpha\nabla\uX,\nabla\uX}
    &=\bigiprod{f_1-\nabla\cdot\vec f_2-\nabla\cdot\vB_1\uX,\uX}\\
    &\le\tfrac12\|\uX\|^2+\tfrac32\bigl(\|f_1\|^2+\|\nabla\cdot\vec f_2\|^2
    +\|\nabla\cdot\vB_1\uX\|^2\bigr).
\end{align*}
After cancelling~$\tfrac12\|\uX\|^2$, integrating in time and using
\eqref{eq: I mu positive}, we have
\[
\int_0^t\|\uX\|^2\,ds
    \le3\int_0^t\bigl(\|f_1\|^2+\|\nabla\cdot\vec f_2\|^2\bigr)\,ds
    +3\int_0^t\|\nabla\cdot(\vB_1\uX)\|^2\,ds,
\]
and by \cref{lem: div B,lem: step 1},
\[
\int_0^t\|\nabla\cdot(\vB_1\uX)\|^2\,ds\le C\int_0^t\bigl(
    \|\mathcal{I}^\alpha\uX\|^2+\|\mathcal{I}^\alpha\nabla\uX\|^2\bigr)\,ds
    \le Ct^\alpha\int_0^t\bigl(\|f_1\|^2+t^{-\alpha}\|\vec f_2\|^2\bigr)\,ds,
\]
which completes the proof.
\end{proof}

The main result for this section now follows easily; once again, the terms 
in~$g$ may be estimated using \cref{lem: g alternative}.

\begin{theorem}\label{thm: grad uX}
The semidiscrete Galerkin solution, defined by~\eqref{eq: integrated Galerkin},
satisfies
\[
t^{\alpha/2}\|\nabla\uX(t)\|
\le C\biggl(\|\uzeroX\|+\int_0^t\|g(s)\|\,ds\biggr)+C\biggl(\frac{1}{t}
\int_0^t\|sg(s)\|^2\,ds\biggr)^{1/2}
\]
for $0<t\le T$, where the constant~$C$ depends on $T$, $\Omega$, $\kappa$~and 
$\vec F$, but not on $\alpha\in(0,1]$~or the subspace~$\mathbb{X}$.
\end{theorem}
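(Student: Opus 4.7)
The plan is to derive \cref{thm: grad uX} as a direct corollary of \cref{lem: grad uX pointwise} in the special case $f_1 = \fX$, $\vec f_2 = \vec 0$, exactly mirroring how \cref{thm: uX stability} was obtained from \cref{lem: uX pointwise}. With $\vec f_2 \equiv \vec 0$ the entire second line of \cref{lem: grad uX pointwise} vanishes (including the divergence-type terms in $\vec f_2$), so the only thing left to do is to control $\frac{1}{t}\int_0^t\bigl(\|\fX\|^2 + \|(\mathcal{M}\fX)'\|^2\bigr)\,ds$ by the two bracketed quantities appearing on the right of the theorem.

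First I would record the identities $\fX'(s) = g(s)$ and $(\mathcal{M}\fX)'(s) = \fX(s) + s\,g(s)$, which are precisely those already used in the proof of \cref{thm: uX stability}. The monotone bound
\[
\|\fX(s)\| \le \|\uzeroX\| + \int_0^t \|g(\tau)\|\,d\tau \quad\text{for } 0 \le s \le t
\]
gives $\frac{1}{t}\int_0^t\|\fX\|^2\,ds \le \max_{0\le s\le t}\|\fX(s)\|^2 \le \bigl(\|\uzeroX\| + \int_0^t\|g(\tau)\|\,d\tau\bigr)^2$, which accounts for the first bracketed quantity. For the second term I would split $\|(\mathcal{M}\fX)'(s)\|^2 \le 2\|\fX(s)\|^2 + 2\|s\,g(s)\|^2$, absorb the first piece into the bound just obtained, and recognise $\frac{2}{t}\int_0^t\|s\,g(s)\|^2\,ds$ as twice the square of the second bracketed quantity. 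Substituting these into \cref{lem: grad uX pointwise}, taking square roots, and using $\sqrt{a+b}\le\sqrt{a}+\sqrt{b}$ for non-negative $a,b$ assembles the stated inequality; the $\alpha$-independence of $C$ is inherited directly from \cref{lem: grad uX pointwise}.

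There is no genuine obstacle here, since all the delicate work has been absorbed into \cref{lem: step 1,lem: step 2,lem: grad step,lem: grad uX pointwise}. The only item warranting a moment's attention is confirming that every $\vec f_2$-dependent term in \cref{lem: grad uX pointwise} -- including the two divergence terms $\|\nabla\cdot\vec f_2\|$ and $\|(\mathcal{M}\nabla\cdot\vec f_2)'\|$ -- genuinely drops out when $\vec f_2 = \vec 0$, so that no artificial source-data dependence contaminates the final bound.
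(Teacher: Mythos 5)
Your proposal is correct and follows exactly the paper's route: apply \cref{lem: grad uX pointwise} with $f_1=\fX$, $\vec f_2=\vec 0$, use $(\mathcal{M}\fX)'=\fX+\mathcal{M}g$, and bound $\tfrac1t\int_0^t\|\fX\|^2\,ds$ by $\bigl(\|\uzeroX\|+\int_0^t\|g\|\,ds\bigr)^2$ just as in the proof of \cref{thm: uX stability}. Nothing is missing.
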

\begin{proof}
Choose $f_1=\fX$~and $f_2=0$ in \cref{lem: grad uX pointwise}, and 
estimate $\fX$ in terms of $\uzeroX$~and $g$ using the same steps as in the 
proof of \cref{thm: uX stability}.
\end{proof}

\begin{remark}\label{remark: u stability}
The uniform stability estimate~\eqref{eq: uX well-posed} for the 
semidiscrete Galerkin solution carries over to the weak solution~$u$ of the 
continuous problem~\eqref{eq: ibvp}, that is,
\[
\|u(t)\|+t^{\alpha/2}\|\nabla u(t)\|
    \le C\biggl(\|u_0\|+\int_0^t\|g(s)\|\,ds\biggr)
    +C\biggl(\frac{1}{t}\int_0^t\|sg(s)\|^2\,ds\biggr)^{1/2}.
\]
Essentially, it suffices to repeat the steps in an earlier stability 
proof~\cite[Theorem~4.1]{McLeanEtAl2019} using \eqref{eq: uX well-posed} as a 
drop-in replacement for an estimate~\cite[Theorem~3.3]{McLeanEtAl2019} in
which the stability constant was dependent on~$\alpha$.
\end{remark}

\begin{remark}\label{remark: zero flux}
By introducing a flux vector~$\vec Qu=-\partial_t^{1-\alpha}\kappa\nabla u
+\vec F\partial_t^{1-\alpha}u$ we can write the fractional Fokker--Planck 
equation~\eqref{eq: ibvp} as a conservation law: 
$\partial_tu+\nabla\cdot\vec Qu=g$. It is then natural to consider a zero-flux 
boundary condition,
\begin{equation}\label{eq: zero flux}
\vec n\cdot\vec Qu=0\quad\text{for $\vec x\in\partial\Omega$ and $0<t\le T$,}
\end{equation}
where $\vec n$ denotes the outward unit normal to~$\Omega$.  (Notice that this 
boundary condition is non-local in time.) In this case, the weak 
solution~$u:(0,T]\to H^1(\Omega)$ is again characterized 
by~\eqref{eq: u weak}, and hence satisfies \eqref{eq: u integrated}, but with 
the test functions~$v$ now taken from the larger space~$H^1(\Omega)$.  We can
then choose a finite dimensional subspace~$\mathbb{X}\subseteq H^1(\Omega)$ and 
again define the Galerkin solution~$\uX:[0,T]\to\mathbb{X}$ 
by~\eqref{eq: integrated Galerkin}.  The analysis of 
\cref{sec: stability} goes through with no change, and in 
particular~$\uX$ is again stable in $L_2(\Omega)$. However, the first step in 
the proof of \cref{lem: grad step} fails because boundary terms are 
introduced if one integrates by parts in space, so our analysis no longer 
yields a bound for~$t^{\alpha/2}\|\nabla\uX(t)\|$.
\end{remark}

\section{Error estimates}\label{sec: error}

We now decompose the error in the semidiscrete Galerkin solution as
\[
\uX-u=\thetaX-\rhoX\quad\text{where}\quad\thetaX=\uX-\RX u\quad\text{and}\quad
\rhoX=u-\RX u,
\]
and where $\RX$ denotes the Ritz projector for the (stationary) elliptic problem
\begin{equation}\label{eq: elliptic problem}
-\nabla\cdot(\kappa\nabla v)+v=g
\quad\text{in $\Omega$, with $v=0$ on~$\partial\Omega$.}
\end{equation}
Thus, $\RX:H^1_0(\Omega)\to\mathbb{X}$ satisfies
\begin{equation}\label{eq: RX}
\iprod{\kappa\nabla\RX v,\nabla\chi}+\iprod{\RX v,\chi}
    =\iprod{\kappa\nabla v,\nabla\chi}+\iprod{v,\chi}
    \quad\text{for $v\in H^1_0(\Omega)$ and $\chi\in\mathbb{X}$,}
\end{equation}
or in other words, $\RX:v\mapsto v_{\mathbb{X}}$ where 
$v_{\mathbb{X}}\in\mathbb{X}$ is the Galerkin solution of the elliptic 
problem~\eqref{eq: elliptic problem}.  Note that, by including the lower-order 
term~$v$, the Ritz projector~$\RX:H^1(\Omega)\to\mathbb{X}$ would also be 
well-defined for the zero-flux boundary condition~\eqref{eq: zero flux}. 

It follows from \eqref{eq: u integrated}, \eqref{eq: integrated Galerkin}~and 
\eqref{eq: RX} that $\thetaX:[0,T]\to\mathbb{X}$ satisfies
\begin{equation}\label{eq: thetaX}
\iprod{\thetaX(t),\chi}
+\bigiprod{\mathcal{I}^\alpha(\kappa\nabla\thetaX)
-\vB_1\thetaX,\nabla\chi}=\iprod{f_1,\chi}+\iprod{\vec f_2,\nabla\chi}
\quad\text{for $\chi\in\mathbb{X}$,}
\end{equation}
where
\begin{equation}\label{eq: f1 f2}
f_1=(\uzeroX-P_{\mathbb{X}}u_0)+(\rhoX-\mathcal{I}^\alpha\rhoX),\qquad
\vec f_2=-\vB_1\rhoX, 
\end{equation}
and $P_{\mathbb{X}}:L_2(\Omega)\to\mathbb{X}$ is the orthoprojector 
given by $\iprod{P_{\mathbb{X}}v,\chi}=\iprod{v,\chi}$
for $v\in L_2(\Omega)$ and $\chi\in\mathbb{X}$.  If $u_0\in H^1_0(\Omega)$ so 
that $\RX u_0$ exists, then $\iprod{f_1,\chi}=\iprod{\tilde f_1,\chi}$ where
\begin{equation}\label{eq: f1 tilde}
\tilde f_1=(\uzeroX-\RX u_0)
    +\bigl(\rhoX-\rhoX(0)\bigr)-\mathcal{I}^\alpha\rhoX.
\end{equation}

We estimate $\thetaX$ in terms of~$\rhoX$ and the error in the discrete 
initial data~$\uzeroX$, as follows.

\begin{lemma}\label{lem: thetaX pointwise}
For $0<t\le T$,
\[
\|\thetaX(t)\|^2\le C\|\uzeroX-P_{\mathbb{X}}u_0\|^2
    +\frac{C}{t}\int_0^t\bigl(\|\rho_X\|^2+s^2\|\rhoX'\|^2\bigr)\,ds.
\]
\end{lemma}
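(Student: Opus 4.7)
The equation~\eqref{eq: thetaX} satisfied by $\thetaX$ has exactly the structure of the generalized Galerkin problem~\eqref{eq: gen RHS}, so \cref{lem: uX pointwise} applies verbatim and gives
\[
\|\thetaX(t)\|^2\le\frac{C}{t}\int_0^t\bigl(\|f_1\|^2+\|(\mathcal{M}f_1)'\|^2\bigr)\,ds
+\frac{C}{t^{1+\alpha}}\int_0^t\bigl(\|\vec f_2\|^2+\|(\mathcal{M}\vec f_2)'\|^2\bigr)\,ds,
\]
with $f_1$~and $\vec f_2$ as in~\eqref{eq: f1 f2}. The whole task is to bound each of the four integrals on the right in terms of $\|\uzeroX-P_{\mathbb{X}}u_0\|^2$, $\|\rhoX\|^2$~and $s^2\|\rhoX'\|^2$. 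Since $0<\alpha\le1$~and $t\le T$, the factors $t^\alpha$~and $t^{2\alpha}$ that will appear are uniformly bounded by constants depending only on~$T$, and this is exactly what delivers the $\alpha$-uniformity.

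For the $f_1$-integral, write $f_1=c+\rhoX-\mathcal{I}^\alpha\rhoX$ with $c=\uzeroX-P_{\mathbb{X}}u_0$. The pointwise bound $\|f_1\|^2\le C\|c\|^2+C\|\rhoX\|^2+C\|\mathcal{I}^\alpha\rhoX\|^2$, combined with \cref{lem: I nu I mu} (taking $\mu=0$, $\nu=\alpha$, which gives $\int_0^t\|\mathcal{I}^\alpha\rhoX\|^2\,ds\le Ct^{2\alpha}\int_0^t\|\rhoX\|^2\,ds$), handles $\int_0^t\|f_1\|^2\,ds$. The delicate piece is $(\mathcal{M}f_1)'$: differentiating $t(\mathcal{I}^\alpha\rhoX)(t)$ directly via~\eqref{eq: D I alpha commutator} would introduce a $\rhoX(0)\omega_\alpha(t)$ boundary term that is not controlled by $\|\rhoX\|$~and $\|\rhoX'\|$ alone. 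The plan is to first use the commutator identity~\eqref{eq: MI commutator} to rewrite
\[
\mathcal{M}\mathcal{I}^\alpha\rhoX
=\mathcal{I}^\alpha(\mathcal{M}\rhoX)+\alpha\mathcal{I}^{\alpha+1}\rhoX,
\]
and only then differentiate: because $(\mathcal{M}\rhoX)(0)=0$, identity~\eqref{eq: D I alpha commutator} yields $(\mathcal{I}^\alpha\mathcal{M}\rhoX)'=\mathcal{I}^\alpha(\mathcal{M}\rhoX)'$ with no boundary contribution, while $(\mathcal{I}^{\alpha+1}\rhoX)'=\mathcal{I}^\alpha\rhoX$. The resulting identity
\[
(\mathcal{M}f_1)'=c+\rhoX+\mathcal{M}\rhoX'
-\mathcal{I}^\alpha(\rhoX+\mathcal{M}\rhoX')-\alpha\mathcal{I}^\alpha\rhoX
\]
has $L_2$-integral over $(0,t)$ at most $C\int_0^t(\|c\|^2+\|\rhoX\|^2+s^2\|\rhoX'\|^2)\,ds$ after one more application of \cref{lem: I nu I mu}.

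For the $\vec f_2$-integral, use $\vec f_2=-\vB_1\rhoX$ and $(\mathcal{M}\vec f_2)'=-\vB_2\rhoX$. \cref{lem: B1 B2} then supplies
\[
\int_0^t\|\vB_1\rhoX\|^2\,ds\le C\int_0^t\|\mathcal{I}^\alpha\rhoX\|^2\,ds
\quad\text{and}\quad
\int_0^t\|\vB_2\rhoX\|^2\,ds\le C\int_0^t\bigl(
\|\mathcal{I}^\alpha(\mathcal{M}\rhoX)'\|^2+\|\mathcal{I}^\alpha\rhoX\|^2\bigr)\,ds.
\]
Expanding $(\mathcal{M}\rhoX)'=\rhoX+\mathcal{M}\rhoX'$ and invoking \cref{lem: I nu I mu} once more converts both right-hand sides into $Ct^{2\alpha}\int_0^t(\|\rhoX\|^2+s^2\|\rhoX'\|^2)\,ds$. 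The prefactor $C/t^{1+\alpha}$ then becomes $Ct^{\alpha-1}$ times this integral; since $t^\alpha\le T^\alpha$ gets absorbed into the generic constant, the total $\vec f_2$-contribution is exactly of the required form $(C/t)\int_0^t(\|\rhoX\|^2+s^2\|\rhoX'\|^2)\,ds$.

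The main obstacle is the $\rhoX(0)$-singularity described above: without the commutator identity~\eqref{eq: MI commutator} to expose the vanishing of $\mathcal{M}\rhoX$ at $t=0$, differentiation would introduce a term proportional to $\|\rhoX(0)\|^2 t^{2\alpha}/\Gamma(\alpha)^2$, which has no counterpart on the right-hand side of the claimed bound. Once that cancellation is secured, the remainder is a careful bookkeeping exercise combining the fractional-integral inequalities of \cref{sec: preliminaries}.
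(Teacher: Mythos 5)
Your proposal is correct and follows essentially the same route as the paper: apply \cref{lem: uX pointwise} to \eqref{eq: thetaX}, use the commutator identity \eqref{eq: MI commutator} together with $(\mathcal{M}\rhoX)(0)=0$ to avoid the $\rhoX(0)\omega_\alpha$ boundary term when differentiating, and control the $\vec f_2$-terms via \cref{lem: B1 B2} and \cref{lem: I nu I mu}, absorbing powers of $t^\alpha\le T^\alpha$ into the constant. The only cosmetic difference is that the paper bounds $s\|\vec f_2'\|$ directly through $\vec f_2'=-\vec F\,\partial_t^{1-\alpha}\rhoX$, whereas you invoke the $\vB_2$ estimate of \cref{lem: B1 B2}; the two are equivalent.
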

\begin{proof}
Noting that \eqref{eq: thetaX} has the same form as~\eqref{eq: gen RHS}, with
$\thetaX$ playing the role of~$\uX$, we may apply 
\cref{lem: uX pointwise} and conclude that
\[
\|\thetaX(t)\|^2\le\frac{C}{t}\int_0^t\bigl(
    \|f_1\|^2+s^2\|f_1'\|^2\bigr)\,ds
    +\frac{C}{t^{1+\alpha}}\int_0^t\bigl(
    \|\vec f_2\|^2+s^2\|\vec f_2'\|^2\bigr)\,ds.
\]
Since $f_1'=\rhoX'-\partial_t^{1-\alpha}\rhoX$, we find with the help 
of \cref{lem: I nu I mu} that
\[
\frac{C}{t}\int_0^t\bigl(\|f_1\|^2+s^2\|f_1'\|^2\bigr)\,ds
    \le C\|\uzeroX-P_{\mathbb{X}}u_0\|^2
    +\frac{C}{t}\int_0^t\bigl(
    \|\rho_X\|^2+s^2\|\rhoX'\|^2+s^2\|\partial_s^{1-\alpha}\rhoX\|^2\bigr)\,ds.
\]
Using the identity~\eqref{eq: MI commutator} and noting that 
$(\mathcal{M}\rhoX)(0)=0$,
\begin{align*}
s\partial_s^{1-\alpha}\rhoX&=s\partial_s\mathcal{I}^\alpha\rhoX 
    =\partial_s(\mathcal{M}\mathcal{I}^\alpha\rhoX)-\mathcal{I}^\alpha\rhoX
    =\partial_s\bigl(\mathcal{I}^\alpha\mathcal{M}\rhoX
        +\alpha\mathcal{I}^{\alpha+1}\rhoX\bigr)-\mathcal{I}^\alpha\rhoX\\
    &=\mathcal{I}^\alpha(\mathcal{M}\rhoX)'+(\alpha-1)\mathcal{I}^\alpha\rhoX
    =\mathcal{I}^\alpha\bigl(\mathcal{M}\rhoX'+\alpha\rhoX\bigr),
\end{align*}
so by \cref{lem: I nu I mu},
\[
\int_0^t s^2\|\partial_s^{1-\alpha}\rhoX\|^2\,ds
    \le2t^{2\alpha}\int_0^t\bigl\|s\rhoX'+\alpha\rhoX\bigr\|^2\,ds
    \le 4t^{2\alpha}\int_0^t\bigl(\|\rhoX\|^2+s^2\|\rhoX'\|^2\bigr)\,ds,
\]
and hence
\begin{equation}\label{eq: f1 rho}
\frac{C}{t}\int_0^t\bigl(\|f_1\|^2+s^2\|f_1'\|^2\bigr)\,ds
    \le C\|\uzeroX-P_{\mathbb{X}}u_0\|^2\\
    +\frac{C}{t}\int_0^t\bigl(\|\rho_X\|^2+s^2\|\rhoX'\|^2\bigr)\,ds.
\end{equation}

Recalling~\eqref{eq: B1}, we have 
$f_2'(t)=-(\vec F\partial_t^{1-\alpha}\rhoX)(t)$ and therefore
by \cref{lem: B1 B2},
\[
\frac{C}{t^{1+\alpha}}\int_0^t\bigl(\|\vec f_2\|^2+s^2\|\vec f_2'\|^2\bigr)\,ds
    \le\frac{C}{t^{1+\alpha}}\int_0^t\|\mathcal{I}^\alpha\rhoX\|^2\,ds
    +\frac{C}{t^{1+\alpha}}\int_0^ts^2\|\partial_s^{1-\alpha}\rhoX\|^2\,ds,
\]
which is bounded by the second term on the right-hand side 
of~\eqref{eq: f1 rho}.
\end{proof}

Two similar bounds hold for $\nabla\thetaX$, but now
involving also $\nabla\rhoX$~and $\nabla\rhoX'$.

\begin{lemma}\label{lem: nabla thetaX}
For $0<t\le T$,
\begin{multline*}
t^\alpha\|\nabla\thetaX(t)\|^2\le C\|\uzeroX-P_{\mathbb{X}}u_0\|^2
    +\frac{C}{t}\int_0^t\bigl(\|\rho_X\|^2+s^2\|\rhoX'\|^2\bigr)\,ds\\
+Ct^{2\alpha-1}\int_0^t\bigl(\|\nabla\rhoX\|^2+s^2\|\nabla\rhoX'\|^2\bigr)\,ds.
\end{multline*}
If $u_0\in H^1_0(\Omega)$, then we also have the alternative bound
\begin{multline*}
t^\alpha\|\nabla\thetaX(t)\|^2\le C\bigl\|\uzeroX-\RX u_0\bigr\|^2
    +\frac{C}{t}\int_0^t\bigl(\|\rho_X-\rho_X(0)\|^2+s^2\|\rhoX'\|^2\bigr)\,ds\\
+Ct^{2\alpha-1}\int_0^t\bigl(\|\rho_X\|^2+\|\nabla\rhoX\|^2
    +s^2\|\nabla\rhoX'\|^2\bigr)\,ds.
\end{multline*}
\end{lemma}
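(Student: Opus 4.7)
The plan is to apply \cref{lem: grad uX pointwise} to~\eqref{eq: thetaX}, treating $\thetaX$ as a solution of the generalized problem~\eqref{eq: gen RHS} with source data $f_1$~and $\vec f_2$ from~\eqref{eq: f1 f2}; this parallels the use of \cref{lem: uX pointwise} inside the proof of \cref{lem: thetaX pointwise}.

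The $f_1$ terms should be handled by repeating the argument of \cref{lem: thetaX pointwise}. Expanding $(\mathcal{M}f_1)'=f_1+\mathcal{M}f_1'$, reusing the identity $s\,\partial_s^{1-\alpha}\rhoX=\mathcal{I}^\alpha(\mathcal{M}\rhoX'+\alpha\rhoX)$ derived there, and then invoking \cref{lem: I nu I mu}, I expect to extract the contribution $C\|\uzeroX-P_{\mathbb{X}}u_0\|^2+(C/t)\int_0^t(\|\rho_X\|^2+s^2\|\rhoX'\|^2)\,ds$ to the final bound.

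The substantive new work is the $\vec f_2$ contribution. Writing $\vec f_2=-\vB_1\rhoX$ and observing that $(\mathcal{M}\vec f_2)'=-\vB_2\rhoX$ directly from the definition of $\vB_2$, I would bound $\|\vec f_2\|$~and $\|(\mathcal{M}\vec f_2)'\|$ by \cref{lem: B1 B2}, and $\|\nabla\cdot\vec f_2\|$~and $\|(\mathcal{M}\nabla\cdot\vec f_2)'\|$ by \cref{lem: div B}. Each of these estimates presents $\mathcal{I}^\alpha$ acting on one of $\rhoX$, $\nabla\rhoX$, $(\mathcal{M}\rhoX)'$, or $(\mathcal{M}\nabla\rhoX)'$. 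A single application of \cref{lem: I nu I mu} strips off the $\mathcal{I}^\alpha$ at the cost of a factor $t^{2\alpha}$, and combined with the outer $1/t$ furnished by \cref{lem: grad uX pointwise} this produces the coefficient $Ct^{2\alpha-1}$ in front of $\int_0^t(\|\rhoX\|^2+s^2\|\rhoX'\|^2+\|\nabla\rhoX\|^2+s^2\|\nabla\rhoX'\|^2)\,ds$. The non-gradient pieces can then be absorbed into the $C/t$ contribution from $f_1$, since $t^{2\alpha-1}\le C/t$ on $(0,T]$ for $\alpha\in(0,1]$ with $C=C(T)$, completing the first bound.

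The alternative bound proceeds by replacing $f_1$ in~\eqref{eq: thetaX} by $\tilde f_1$ of~\eqref{eq: f1 tilde}, which is legitimate since $\iprod{f_1-\tilde f_1,\chi}=0$ for $\chi\in\mathbb{X}$. Note that $\tilde f_1'=f_1'$, so only the bound on $\|\tilde f_1\|$ itself changes: the piece $\rhoX-\rhoX(0)$ replaces $\rhoX$ in the $C/t$ integrand, while the extra $\mathcal{I}^\alpha\rhoX$ summand, again by \cref{lem: I nu I mu}, contributes a fresh $Ct^{2\alpha-1}\int_0^t\|\rhoX\|^2\,ds$ that \emph{cannot} be absorbed and therefore appears explicitly in the second bound. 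I anticipate the main difficulty will be bookkeeping: the estimates for $\vB_2\rhoX$ and $\nabla\cdot\vB_2\rhoX$ each generate the four distinct quantities $\mathcal{I}^\alpha\rhoX$, $\mathcal{I}^\alpha\nabla\rhoX$, $\mathcal{I}^\alpha(\mathcal{M}\rhoX)'$, $\mathcal{I}^\alpha(\mathcal{M}\nabla\rhoX)'$, and one must verify that every resulting term lands under exactly one of the two coefficient patterns $C/t$ or $Ct^{2\alpha-1}$ advertised in the lemma.
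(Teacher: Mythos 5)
Your proposal is correct and follows essentially the same route as the paper: apply \cref{lem: grad uX pointwise} to~\eqref{eq: thetaX}, reuse the argument leading to~\eqref{eq: f1 rho} for the $f_1$ terms, bound $\vec f_2=-\vB_1\rhoX$ and $(\mathcal{M}\vec f_2)'=-\vB_2\rhoX$ via \cref{lem: B1 B2,lem: div B} followed by \cref{lem: I nu I mu} to produce the $t^{2\alpha-1}$ factor, and swap $f_1$ for $\tilde f_1$ of~\eqref{eq: f1 tilde} (using $\tilde f_1'=f_1'$) for the alternative bound. Nothing further is needed.
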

\begin{proof}
With $f_1$~and $\vec f_2$ given by~\eqref{eq: f1 f2}, we apply 
\cref{lem: grad uX pointwise} to~\eqref{eq: thetaX} and bound
$t^\alpha\|\nabla\thetaX(t)\|^2$ by
\[
\frac{C}{t}\int_0^t\Bigl(
    \|f_1\|^2+\|(\mathcal{M}f_1)'\|^2+\|\vec f_2\|^2
    +\|(\mathcal{M}\vec f_2)'\|^2+\|\nabla\cdot\vec f_2\|^2
    +\|(\mathcal{M}\nabla\cdot\vec f_2)'\|^2\Bigr)\,ds.
\]
The terms in~$f_1$ can be bounded as in~\eqref{eq: f1 rho}, and since
$\vec f_2=-\vB_1\rhoX$~and $(\mathcal{M}\vec f_2)'=-\vB_2\rhoX$ we see from 
\cref{lem: B1 B2} followed by \cref{lem: div B} and then 
\cref{lem: I nu I mu} that 
\begin{align*}
\int_0^t\bigl(&\|\vec f_2\|^2+\|(\mathcal{M}\vec f_2)'\|^2
    +\|\nabla\cdot\vec f_2\|^2
    +\|(\mathcal{M}\nabla\cdot\vec f_2)'\|^2\bigr)\,ds\\
&\le C\int_0^t\Bigl(
    \|\mathcal{I}^\alpha\rhoX\|^2+\|\mathcal{I}^\alpha(\mathcal{M}\rhoX)'\|^2
    +\|\mathcal{I}^\alpha\nabla\rhoX\|^2
    +\|\mathcal{I}^\alpha(\mathcal{M}\nabla\rhoX)'\|^2\Bigr)\,ds\\
&\le Ct^{2\alpha}\int_0^t\Bigl(\|\rhoX\|^2+s^2\|\rhoX'\|^2+\|\nabla\rhoX\|^2
    +s^2\|\nabla\rhoX'\|^2\Bigr)\,ds,
\end{align*}
which completes the proof of the first bound.  If $u_0\in H^1_0(\Omega)$ then 
we can replace $f_1$ with~$\tilde f_1$ from~\eqref{eq: f1 tilde}, and since 
$\tilde f_1'=f_1'$ the second bound follows easily via the arguments leading 
to~\eqref{eq: f1 rho} (with $\RX$ replacing $P_{\mathbb{X}}$).
\end{proof}

To obtain more explicit error bounds we will use the regularity properties 
stated in the next theorem. The seminorm~$|\cdot|_r$ and norm~$\|\cdot\|_r$ in 
the (fractional-order) Sobolev space~$\dot H^r(\Omega)$ is defined in the usual 
way~\cite{Thomee2006} via the Dirichlet eigenfunctions of the Laplacian 
on~$\Omega$, and this spatial domain is assumed convex to ensure 
$H^2$-regularity for the elliptic problem. The proof relies on 
results~\cite[Lemma~2,Theorems 11--13]{McLeanEtAl2020} involving constants that 
blow up as~$\alpha\to1$.  Nevertheless, the estimates 
\eqref{eq: u reg 1}--\eqref{eq: u reg 3} hold in the limiting case~$\alpha=1$, 
when the problem reduces to the classical Fokker--Planck PDE; see 
Thom\'ee~\cite[Lemmas 3.2~and 4.4]{Thomee2006} for a proof if~$M=0$.

\begin{theorem}\label{thm: u reg}
Assume that $\Omega$ is convex, $0<\alpha<1$, $0\le r\le2$ and $\eta>0$.  If 
$u_0\in\dot H^r(\Omega)$~and if $g:(0,T]\to L_2(\Omega)$ is continuously
differentiable with $\|g(t)\|+t\|g'(t)\|\le Mt^{\eta-1}$, 
then the weak solution of~\eqref{eq: ibvp} satisfies, for $0<t\le T$,
\begin{equation}\label{eq: u reg 1}
\|u(t)\|_1 \le C_{\alpha,\eta}
    \bigl(\|u_0\|_r\,t^{-\alpha(1-r)/2}
        +Mt^{\eta-\alpha/2}\bigr)\quad\text{if $r\le1$,}
\end{equation}
and
\begin{equation}\label{eq: u reg 2}
t^{-\alpha/2}\|u(t)-u_0\|_1
    \le C_{\alpha,\eta}\bigl(\|u_0\|_r\,t^{-\alpha(2-r)/2}
    +Mt^{\eta-\alpha}\bigr)\quad\text{if $r\ge1$,}
\end{equation}
and
\begin{equation}\label{eq: u reg 3}
t^{1-\alpha/2}\|u'(t)\|_1+\|u(t)\|_2+t\|u'(t)\|_2
    \le C_{\alpha,\eta}
    \bigl(\|u_0\|_r\,t^{-\alpha(2-r)/2}+Mt^{\eta-\alpha}\bigr).
\end{equation}
\end{theorem}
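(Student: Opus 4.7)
The plan is to reduce to the regularity theory for the pure fractional diffusion problem ($\vec F\equiv\vec 0$), for which \cite[Lemma~2, Theorems~11--13]{McLeanEtAl2020} already yield estimates of the form \eqref{eq: u reg 1}--\eqref{eq: u reg 3}, and then to treat the drift term as a perturbation of the right-hand side. Since $C_{\alpha,\eta}$ is allowed to depend on~$\alpha$, the blow-up of the cited constants as $\alpha\to 1$ costs us nothing; the limiting case $\alpha=1$ is subsumed by the classical estimates of Thom\'ee.

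First I would rewrite \eqref{eq: ibvp} as a pure subdiffusion equation with a modified source,
\[
\partial_t u-\nabla\cdot\bigl(\partial_t^{1-\alpha}\kappa\nabla u\bigr)=\tilde g,
\qquad
\tilde g:=g+\nabla\cdot\bigl(\vec F\,\partial_t^{1-\alpha}u\bigr),
\]
and apply \cite[Theorems~11--13]{McLeanEtAl2020} directly to this equation. This would produce bounds of the stated form but with $M$ replaced by an implicit bound on $\|\tilde g(t)\|+t\|\tilde g'(t)\|$. Using the boundedness of $\vec F$, $\partial_t\vec F$, $\nabla\cdot\vec F$ and $\nabla\cdot\partial_t\vec F$, together with the identity \eqref{eq:  D I alpha commutator} written as $\partial_t^{1-\alpha}u=\omega_\alpha(t)u_0+\mathcal{I}^\alpha u'$, I can estimate
\[
\|\tilde g(t)-g(t)\|\le C\omega_\alpha(t)\|u_0\|_1
    +C\bigl\|(\mathcal{I}^\alpha u')(t)\bigr\|_1,
\]
and a similar but more technical bound for $\|\tilde g'(t)-g'(t)\|$ that additionally involves $\partial_t^{2-\alpha}u$. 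Substituting back yields a weakly-singular linear Volterra-type inequality for a suitable weighted quantity such as
\[
y(t)=t^{\alpha(2-r)/2}\bigl(\|u(t)\|_2+t\|u'(t)\|_2+t^{1-\alpha/2}\|u'(t)\|_1\bigr),
\]
which I would close by the fractional Gronwall inequality \cref{lem: Gronwall}. The two $H^1$-bounds \eqref{eq: u reg 1} and \eqref{eq: u reg 2} would then come either from the same machinery with $j=1$ in place of $j=2$, or by interpolation with the $L_2$-stability of \cref{remark: u stability}.

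The main obstacle is the exponent bookkeeping. The abstract estimates in \cite{McLeanEtAl2020} carry weights of the form $t^{-\alpha(j-r)/2}$ for $j\in\{1,2\}$, and inserting $\tilde g$, which is itself singular at $t=0$ through $\partial_t^{1-\alpha}u$, one must verify that the resulting nested fractional integrals combine with these weights so that both the initial-data contribution $\|u_0\|_r$ and the forcing contribution $Mt^\eta$ emerge with exactly the stated powers of~$t$, uniformly for every $r\in[0,2]$. The convexity of~$\Omega$ enters through the $H^2$-regularity of the underlying stationary elliptic problem, needed to lift weak $H^1$-bounds to $H^2$-bounds. Once this bookkeeping is settled, the closure argument is essentially mechanical.
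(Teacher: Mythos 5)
Your overall strategy diverges from the paper at the very first step, and in a way that creates real work the paper never has to do. The cited results \cite[Theorems~11--13]{McLeanEtAl2020} are stated for the time-fractional \emph{advection}--diffusion problem, i.e.\ they already include the drift term $\vec F\,\partial_t^{1-\alpha}u$; the paper's proof therefore consists only of quoting Theorem~12 for \eqref{eq: u reg 2}, Theorem~13 for \eqref{eq: u reg 3}, and obtaining \eqref{eq: u reg 1} by interpolating, in the initial-data space, between the $L_2\to H^1$ smoothing bound $\|u(t)\|_1\le C_{\alpha,\eta}(\|u_0\|t^{-\alpha/2}+Mt^{\eta-\alpha/2})$ and the $H^1\to H^1$ bound $\|u(t)\|_1\le C_{\alpha,\eta}(\|u_0\|_1+Mt^{\eta-\alpha/2})$. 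Your plan to strip out the drift, write $\tilde g=g+\nabla\cdot(\vec F\,\partial_t^{1-\alpha}u)$, and recover the drift case by a fractional-Gronwall bootstrap is in effect an attempt to re-prove the cited theorems rather than use them, and as sketched it has two genuine gaps.

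First, your bound $\|\tilde g(t)-g(t)\|\le C\omega_\alpha(t)\|u_0\|_1+C\|(\mathcal{I}^\alpha u')(t)\|_1$ presupposes $u_0\in H^1(\Omega)$, but the theorem only assumes $u_0\in\dot H^r(\Omega)$ with $0\le r\le2$, and \eqref{eq: u reg 1} and \eqref{eq: u reg 3} must hold for $r<1$; for such data the term $\omega_\alpha(t)\|u_0\|_1$ is meaningless and the correct singular behaviour of $\|\partial_t^{1-\alpha}u(t)\|_1$ near $t=0$ has to be extracted from the (as yet unproven) smoothing estimates themselves, which is precisely the circularity the bootstrap must resolve. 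Second, feeding $\tilde g$ into \cite[Theorems~11--13]{McLeanEtAl2020} requires controlling $t\|\tilde g'(t)\|$, which, as you note, involves $\partial_t^{2-\alpha}u$ in $H^1$ (through $\omega_{\alpha-1}(t)u_0$ and $\partial_t^{1-\alpha}u'$); this second-order-in-time quantity does not appear in your bootstrapped functional $y(t)=t^{\alpha(2-r)/2}(\|u(t)\|_2+t\|u'(t)\|_2+t^{1-\alpha/2}\|u'(t)\|_1)$, so the Volterra-type inequality you intend to close with \cref{lem: Gronwall} does not actually close without enlarging $y$ and proving additional higher-order estimates — and even then the nonlocal terms $\mathcal{I}^\alpha u'$ must be massaged into the convolution form $\int_0^t\omega_\mu(t-s)y(s)\,ds$ that \cref{lem: Gronwall} requires. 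Finally, your alternative suggestion of deducing \eqref{eq: u reg 1} by interpolating with the $L_2$-stability of \cref{remark: u stability} does not work as stated: that remark bounds $\|u(t)\|$ (an $L_2\to L_2$ statement), whereas the interpolation needed is between $L_2\to H^1$ and $H^1\to H^1$ bounds for the solution map, which is exactly what the paper takes from \cite[Theorems~11 and 12]{McLeanEtAl2020}.
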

\begin{proof}
We showed~\cite[Theorem~11]{McLeanEtAl2020} that
\[
\|u(t)\|_1 
    \le C_{\alpha,\eta}\bigl(\|u_0\|t^{-\alpha/2}+Mt^{\eta-\alpha/2}\bigr)
\]
and~\cite[Theorem~12]{McLeanEtAl2020} that
\[
\|u(t)-u_0\|_1+t\|u'(t)\|_1\le 
    C_{\alpha,\eta}\bigl(\|u_0\|_1+Mt^{\eta-\alpha/2}\bigr).
\]
Hence, $\|u(t)\|_1\le C\bigl(\|u_0\|_1+Mt^{\eta-\alpha/2}\bigr)$ and 
\eqref{eq: u reg 1} follows by interpolation.  The 
estimates \eqref{eq: u reg 2}~and \eqref{eq: u reg 3} were proved 
already~\cite[Theorems 12~and 13]{McLeanEtAl2020}.
\end{proof}

Now consider the concrete example in which $\mathbb{X}=S_h$ is the usual
continuous piecewise-linear finite element space for a triangulation 
of~$\Omega\subseteq\mathbb{R}^d$ with maximum element diameter~$h$, and
use a subscript~$h$ instead of~$\mathbb{X}$, writing $u_h$, $\theta_h$, 
$\rho_h$ etc.  The error in the Ritz projection satisfies
\begin{equation}\label{eq: Ritz error}
\|\rho_h(t)\|+h\|\nabla\rho_h(t)\|\le Ch^r|u(t)|_r\quad\text{for $r\in\{1,2\}$,}
\end{equation}
allowing us to prove the following error bounds for $u_h$~and $\nabla u_h$.  
Notice that if $0<\alpha<1/2$, then the restriction~$\alpha(2-r)<1$ is satisfied 
for all~$r\in[0,2]$, but if $1/2\le\alpha<1$ (and hence 
$0\le2-\alpha^{-1}<1$) then we are limited to~$r\in(2-\alpha^{-1},2]$.

\begin{theorem}\label{thm: uh error}
Let $0\le r\le2$ with $\alpha(2-r)<1$, and assume that the assumptions of 
\cref{thm: u reg} are satisfied with~$\eta\ge\alpha r/2$. Then, the 
semidiscrete finite element solution~$u_h$ satisfies the error bound
\[
\|u_h(t)-u(t)\|\le 
C\|u_{0h}-P_hu_0\|+C_\alpha h^2\,
\frac{t^{-\alpha(2-r)/2}}{\sqrt{1-\alpha(2-r)}}\,(\|u_0\|_r+M),
\]
and the gradient of~$u_h$ satisfies
\[
\|\nabla u_h(t)-\nabla u(t)\|
\le Ct^{-\alpha/2}\|u_{0h}-Q_{r,h}u_0\|
+C_\alpha h\,\frac{t^{-\alpha(2-r)/2})}{\sqrt{1-\alpha(2-r)}}\,(\|u_0\|_r+M),
\]
where $Q_{r,h}$ is either $P_h$ if $r\le1$, or else $R_h$ if $r\ge1$.
\end{theorem}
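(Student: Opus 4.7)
The plan is to decompose $u_h-u=\theta_h-\rho_h$ and bound each piece separately, feeding Ritz-projection estimates~\eqref{eq: Ritz error} combined with the regularity~\cref{thm: u reg} into \cref{lem: thetaX pointwise} (for the $L_2$ bound) and into \cref{lem: nabla thetaX} (for the gradient bound).

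For the $L_2$ part, I would first handle $\|\rho_h(t)\|$ pointwise: \eqref{eq: Ritz error} with $r=2$ combined with~\eqref{eq: u reg 3} gives $\|\rho_h(t)\|\le C_{\alpha,\eta}h^2\bigl(\|u_0\|_r\,t^{-\alpha(2-r)/2}+M t^{\eta-\alpha}\bigr)$, and the assumption $\eta\ge\alpha r/2$ makes $t^{\eta-\alpha}$ at most a $T$-dependent multiple of $t^{-\alpha(2-r)/2}$. Next, \cref{lem: thetaX pointwise} reduces $\|\theta_h(t)\|$ to bounding $\|u_{0h}-P_hu_0\|$ and $\frac{1}{t}\int_0^t\bigl(\|\rho_h\|^2+s^2\|\rho_h'\|^2\bigr)\,ds$. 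Applying~\eqref{eq: Ritz error} and then~\eqref{eq: u reg 3} majorises this integral by
\[
\frac{C_{\alpha,\eta}h^4}{t}\int_0^t\bigl(\|u_0\|_r^2\,s^{-\alpha(2-r)}
    +M^2s^{2(\eta-\alpha)}\bigr)\,ds.
\]
The restriction $\alpha(2-r)<1$ makes the $\|u_0\|_r^2$ integral finite and produces the factor $t^{-\alpha(2-r)}/(1-\alpha(2-r))$, while $\eta\ge\alpha r/2$ gives $2(\eta-\alpha)\ge-\alpha(2-r)>-1$, so the $M^2$ integral is integrable and bounded by a $T$-dependent multiple of the same factor. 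Taking square roots produces the first bound.

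The gradient bound follows the same strategy via \cref{lem: nabla thetaX}. The integrals already present in \cref{lem: thetaX pointwise} are treated exactly as above, and the new contribution $Ct^{2\alpha-1}\int_0^t\bigl(\|\nabla\rho_h\|^2+s^2\|\nabla\rho_h'\|^2\bigr)\,ds$ costs only one power of~$h$ from~\eqref{eq: Ritz error}. After inserting~\eqref{eq: u reg 3} its temporal factor becomes $t^{2\alpha-1}\cdot t^{1-\alpha(2-r)}/(1-\alpha(2-r))=t^{\alpha r}/(1-\alpha(2-r))$; dividing by~$t^\alpha$ from the left-hand side of \cref{lem: nabla thetaX} and taking square roots gives $h\,t^{\alpha(r-1)/2}/\sqrt{1-\alpha(2-r)}$, which is dominated by $T^{\alpha/2}$ times the desired $h\,t^{-\alpha(2-r)/2}/\sqrt{1-\alpha(2-r)}$. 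The pointwise $\|\nabla\rho_h(t)\|$ term is treated analogously using one power of~$h$. When $r\ge1$ and $u_0\in\dot H^1(\Omega)$, I would switch to the alternative bound in \cref{lem: nabla thetaX} (involving $R_h$ and $\rho_h-\rho_h(0)$), using \eqref{eq: u reg 2} in place of \eqref{eq: u reg 3} to control the first-difference $\|\rho_h-\rho_h(0)\|$.

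The main obstacle is the careful bookkeeping of temporal exponents. At each step one must check that the integrals emerging from the stability lemmas produce either a singularity of the target form~$t^{-\alpha(2-r)/2}$ or a bounded (only $T$-dependent) contribution, and one must verify that the hypothesis $\eta\ge\alpha r/2$ is \emph{just} enough to make the forcing-driven terms involving~$M$ behave no worse than the initial-data terms involving~$\|u_0\|_r$; without this alignment, the $M^2 s^{2(\eta-\alpha)}$ integrand could fail to be integrable at~$s=0$, causing the proof to collapse.
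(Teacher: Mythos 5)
Your $L_2$ bound follows the paper's proof essentially verbatim: decompose $u_h-u=\theta_h-\rho_h$, bound $\|\rho_h\|$ pointwise by \eqref{eq: Ritz error} and \eqref{eq: u reg 3}, and feed $\tfrac1t\int_0^t(\|\rho_h\|^2+s^2\|\rho_h'\|^2)\,ds\le C_\alpha K_r h^4 t^{-\alpha(2-r)}/(1-\alpha(2-r))$ into \cref{lem: thetaX pointwise}; your explicit check that $\eta\ge\alpha r/2$ and $\alpha(2-r)<1$ keep the $M$-terms integrable is exactly the bookkeeping hidden in the paper's constant $K_r$. That part is fine.

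The gradient bound, however, has a genuine gap at the step ``the integrals already present in \cref{lem: thetaX pointwise} are treated exactly as above.'' In \cref{lem: nabla thetaX} the left-hand side is $t^\alpha\|\nabla\theta_h(t)\|^2$, so \emph{every} term on the right gets divided by $t^\alpha$, not just the $t^{2\alpha-1}\int(\|\nabla\rho_h\|^2+s^2\|\nabla\rho_h'\|^2)\,ds$ term you track. If you recycle the $L_2$-proof estimate
$\tfrac1t\int_0^t(\|\rho_h\|^2+s^2\|\rho_h'\|^2)\,ds\le C_\alpha K_r h^4\,t^{-\alpha(2-r)}/(1-\alpha(2-r))$,
then after dividing by $t^\alpha$ and taking square roots you obtain a contribution of order $h^2\,t^{-\alpha/2}\,t^{-\alpha(2-r)/2}$, which is \emph{not} bounded by the claimed $C_\alpha h\,t^{-\alpha(2-r)/2}$ unless you impose an inverse-type condition $h\lesssim t^{\alpha/2}$ that is nowhere assumed. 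This is precisely why the paper re-estimates this term differently in the gradient proof: for $r\le1$ it uses only one power of $h$, $\|\rho_h\|+s\|\rho_h'\|\le Ch(\|u\|_1+s\|u'\|_1)$, paired with the $H^1$ regularity \eqref{eq: u reg 1} and the $t^{1-\alpha/2}\|u'(t)\|_1$ part of \eqref{eq: u reg 3}, giving the milder singularity $s^{-\alpha(1-r)}=s^{\alpha}s^{-\alpha(2-r)}$ whose spare factor $s^\alpha$ exactly absorbs the division by $t^\alpha$. The same repair is needed in your $r\ge1$ branch: besides using \eqref{eq: u reg 2} for $\|\rho_h(s)-\rho_h(0)\|\le Ch\|u(s)-u_0\|_1$, the companion term $s^2\|\rho_h'\|^2$ must likewise be bounded by $Ch^2s^2\|u'(s)\|_1^2\le C_\alpha K_r h^2 s^{\alpha}s^{-\alpha(2-r)}$ rather than by $h^4s^2|u'(s)|_2^2$, and the extra $t^{2\alpha-1}\int_0^t\|\rho_h\|^2\,ds$ term is handled via the uniform bound $\|\rho_h(s)\|\le Ch\|u(s)\|_1\le C_\alpha K_r^{1/2}h$. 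Without this switch from $(h^2,\dot H^2)$ to $(h,\dot H^1)$ estimates for the non-gradient $\rho_h$-terms, the proof delivers a weaker bound with an extra $h\,t^{-\alpha/2}$ factor, not the stated theorem.
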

\begin{proof}
For brevity, let $K_r=(\|u_0\|_r+M)^2$. Using \eqref{eq: Ritz error} followed
by~\eqref{eq: u reg 3}, we have
\[
\|\rho_h(s)\|^2+s^2\|\rho_h'(s)\|^2\le Ch^4\bigl(
|u(s)|_2^2+s^2|u'(s)|_2^2\bigr)\le C_\alpha K_r h^4s^{-\alpha(2-r)},
\]
so, because of the assumption~$\alpha(2-r)<1$, 
\[
\frac{1}{t}\int_0^t\bigl(\|\rho_h(s)\|^2+s^2\|\rho_h'(s)\|^2\bigr)\,ds
\le C_\alpha K_rh^4\,\frac{t^{-\alpha(2-r)}}{1-\alpha(2-r)}.
\]
Since $\|u_h-u\|\le\|\theta_h\|+\|\rho_h\|$ and 
$\|\rho_h(t)\|^2\le C_\alpha K_rt^{-\alpha(2-r)}h^4$, the error bound for~$u_h$ 
follows by \cref{lem: thetaX pointwise}.

To estimate the error in~$\nabla u_h$, we apply \eqref{eq: Ritz error}~and
\eqref{eq: u reg 1} to obtain
\[
\|\rho_h(s)\|^2+s^2\|\rho_h'(s)\|^2\le Ch^2\bigl(
|u(s)|_1^2+s^2|u'(s)|_1^2\bigr)\le C_\alpha K_rh^2s^{-\alpha(1-r)} 
\quad\text{if $r\le1$,}
\]
and \eqref{eq: u reg 3} to obtain
\[
\|\nabla\rho_h(s)\|^2+s^2\|\nabla\rho_h'(s)\|^2\le Ch^2\bigl(
    |u(s)|_2^2+s^2|u'(s)|_2^2\bigr)
\le C_\alpha K_rh^2 s^{-\alpha(2-r)},
\]
so
\[
\frac{1}{t}\int_0^t\bigl(\|\rho_h(s)\|^2+s^2\|\rho_h'(s)\|^2\bigr)\,ds
    \le t^\alpha\,C_\alpha K_rh^2\,\frac{t^{-\alpha(2-r)})}{1-\alpha(1-r)}
    \quad\text{if $r\le1$,}
\]
and
\begin{equation}\label{eq: nabla rho_h}
t^{2\alpha-1}\int_0^t\bigl(\|\nabla\rho_h\|^2+s^2\|\nabla\rho_h'\|^2\bigr)\,ds
    \le\frac{C_\alpha K_rt^{\alpha r}h^2}{1-\alpha(2-r)}
    =t^{2\alpha}\,C_\alpha K_rh^2\,\frac{t^{-\alpha(2-r)}}{1-\alpha(2-r)}.
\end{equation}
Since 
$\|\nabla u_h(t)-\nabla u(t)\|\le\|\nabla\theta_h(t)\|+\|\nabla\rho_h(t)\|$,
the first estimate of \cref{lem: nabla thetaX} implies that
the error bound for~$\nabla u_h$ holds for the case~$r\le1$.

If $r\ge1$, then we see using \eqref{eq: u reg 1}--\eqref{eq: Ritz error} that
\[
\|\rho_h(s)-\rho_h(0)\|^2+s^2\|\rho_h'(s)\|^2
    \le Ch^2\bigl(\|u(s)-u(0)\|_1^2+s^2\|u'(s)\|_1^2\bigr)
    \le s^\alpha C_\alpha K_rh^2s^{-\alpha(2-r)}
\]
and $\|\rho_h(s)\|^2\le Ch^2\|u(s)\|_1^2
\le C_\alpha K_1h^2\le C_\alpha K_rh^2$, so
\[
\frac{1}{t}\int_0^t\bigl(\|\rho_h-\rho_h(0)\|^2+s^2\|\rho_h'(s)\|^2\bigr)\,ds
    +t^{2\alpha-1}\int_0^t\|\rho_h\|^2\,ds
    \le t^\alpha C_\alpha K_rh^2\,\frac{t^{-\alpha(2-r)}}{1-\alpha(2-r)}.
\]
Hence, using the second estimate of \cref{lem: nabla thetaX}
and \eqref{eq: nabla rho_h}, the error bound for~$\nabla u_h$ follows also for 
the case~$r\ge1$.
\end{proof}

\begin{remark}
If $r=2$ then by choosing $u_{0h}=R_hu_0$ we obtain an error bound that is 
uniform in time:
\[
\|u_h(t)-u(t)\|+h\|\nabla u_h(t)-\nabla u(t)\|\le C_\alpha h^2(\|u_0\|_2+M)
\quad\text{for $0<t\le T$.}
\]
\end{remark}

\begin{remark}
As a consequence of \cref{remark: zero flux}, if the zero-flux boundary 
condition~\eqref{eq: zero flux} is imposed then the proof of the error 
bound for~$u_h$ in \cref{thm: uh error} remains valid, but not that of
the error bound for~$\nabla u_h$.
\end{remark}

\section{Discontinuous Galerkin time stepping when $\vec F\equiv\vec0$}
\label{sec: DG}

We briefly consider a fully-discrete scheme for the fractional diffusion 
equation, that is, for the problem~\eqref{eq: ibvp} in the 
case~$\vec F\equiv\vec0$.  For time levels $0=t_0<t_1<\cdots<t_N=T$ we denote 
the $n$th time interval by~$I_n=(t_{n-1},t_n)$ and the $n$th step size 
by~$k_n=t_n-t_{n-1}$.  We choose an integer~$p_n\ge0$ for each time 
interval~$I_n$, and define the vector space~$\mathcal{W}$ consisting of all 
functions $X:\bigcup_{n=1}^NI_n\to\mathbb{X}$ such that the 
restriction~$X|_{I_n}$ is a polynomial in~$t$ of degree at most~$p_n$ with 
coefficients in~$\mathbb{X}$. For any $X\in\mathcal{W}$, write
\[
X^n_+=\lim_{\epsilon\downarrow0}X(t_n+\epsilon),\qquad
X^n_-=\lim_{\epsilon\downarrow0}X(t_n-\epsilon),\qquad
\jump{X}^n=X^n_+-X^n_-,
\]
then the discontinuous Galerkin (DG) solution~$U\in\mathcal{W}$ is defined by 
requiring that~\cite{Mustapha2015}
\begin{equation}\label{eq: DG}
\bigiprod{\jump{U}^n,X^{n-1}_+}+\int_{I_n}\iprod{\partial_tU,X}\,dt
    +\int_{I_n}\iprod{\partial_t^{1-\alpha}\kappa\nabla U,\nabla X}\,dt
    =\int_{I_n}\iprod{g,X}\,dt
\end{equation}
for $X\in\mathcal{W}$ and $1\le n\le N$, with $U^0_-=\uzeroX$ (so that
$\jump{U}^0=U^0_+-\uzeroX$).  To state a stability estimate for this scheme, 
let $C_\Omega$ denote the constant arising in the Poincar\'e inequality 
for~$\Omega$,
\begin{equation}\label{eq: Poincare}
\|v\|^2\le C_\Omega\|\nabla v\|^2\quad\text{for $v\in H^1_0(\Omega)$,}
\end{equation}
and define $\Psi:(0,1]\to\mathbb{R}$ by
\[
\Psi(\alpha)=\frac{1}{\pi^{1-\alpha}}\,
    \frac{(2-\alpha)^{2-\alpha}}{(1-\alpha)^{1-\alpha}}\,
    \frac{1}{\sin(\tfrac12\pi\alpha)}\quad\text{for $0<\alpha<1$.}
\]
Notice that $\Psi(1)=\lim_{\alpha\to1}\Psi(\alpha)=1$ but 
$\Psi(\alpha)\sim8\pi^{-2}\alpha^{-1}$ blows up as~$\alpha\to0$.  We will use 
the inequality~\cite[Theorem~A.1]{McLean2012}
\begin{equation}\label{eq: Psi bound}
\int_0^T\bigiprod{\partial_t^{1-\alpha}v,v}\,dt
    \ge\frac{T^{1-\alpha}}{\Psi(\alpha)}\int_0^T\|v\|^2\,dt.
\end{equation}

\begin{theorem}\label{thm: DG}
If $0<\alpha\le1$, then the DG solution of the fractional diffusion problem 
satisfies
\[
\|U^n_-\|^2+\sum_{j=1}^{n-1}\|\jump{U}^j\|^2
    +\int_0^{t_n}\bigiprod{\partial_t^{1-\alpha}\kappa\nabla U,\nabla U}\,dt
    \le\|\uzeroX\|^2+\frac{C_\Omega\Psi(\alpha)}{\kappa_{\min}t_n^{1-\alpha}}
    \int_0^{t_n}\|g(t)\|^2\,dt
\]
for $1\le n\le N$.
\end{theorem}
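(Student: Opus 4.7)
The plan is to test with $X = U$ in the DG formulation~\eqref{eq: DG}, telescope the resulting identity across time intervals, and then absorb the source contribution into the fractional diffusion term by means of the $\Psi(\alpha)$-inequality~\eqref{eq: Psi bound}.

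First I would fix $n$ and choose $X = U$ on each of $I_1,\ldots,I_n$ (so that $X^{j-1}_+ = U^{j-1}_+$). The standard DG algebraic manipulation for the classical time-derivative, namely
\[
\int_{I_j}\iprod{\partial_t U, U}\,dt = \tfrac12\|U^j_-\|^2 - \tfrac12\|U^{j-1}_+\|^2,
\]
together with the polarization identity
\[
\bigiprod{\jump{U}^{j-1}, U^{j-1}_+} = \tfrac12\|U^{j-1}_+\|^2 - \tfrac12\|U^{j-1}_-\|^2 + \tfrac12\|\jump{U}^{j-1}\|^2,
\]
combines the first two terms of~\eqref{eq: DG} on $I_j$ into $\tfrac12(\|U^j_-\|^2 - \|U^{j-1}_-\|^2) + \tfrac12\|\jump{U}^{j-1}\|^2$. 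Summing from $j=1$ to $j=n$ and using $U^0_- = \uzeroX$ yields the energy identity
\[
\tfrac12\|U^n_-\|^2 + \tfrac12\sum_{j=0}^{n-1}\|\jump{U}^j\|^2 + \int_0^{t_n}\bigiprod{\partial_t^{1-\alpha}\kappa\nabla U,\nabla U}\,dt = \tfrac12\|\uzeroX\|^2 + \int_0^{t_n}\iprod{g,U}\,dt.
\]

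Next I would bound the source integral. By Cauchy--Schwarz in time, $\int_0^{t_n}\iprod{g,U}\,dt$ is bounded by $\bigl(\int_0^{t_n}\|g\|^2\,dt\bigr)^{1/2}\bigl(\int_0^{t_n}\|U\|^2\,dt\bigr)^{1/2}$. To control $\int_0^{t_n}\|U\|^2\,dt$, I would use Poincar\'e~\eqref{eq: Poincare} and $\kappa\ge\kappa_{\min}$ to pass to $\int_0^{t_n}\iprod{\kappa\nabla U,\nabla U}\,dt$, and then apply~\eqref{eq: Psi bound} componentwise to the vector-valued function $v = \sqrt\kappa\,\nabla U$, using that $\kappa=\kappa(\vec x)$ is $t$-independent so $\partial_t^{1-\alpha}$ commutes with multiplication by $\sqrt\kappa$. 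The resulting chain produces
\[
\int_0^{t_n}\|U\|^2\,dt \le \frac{C_\Omega\,\Psi(\alpha)}{\kappa_{\min}\,t_n^{1-\alpha}}\int_0^{t_n}\bigiprod{\partial_t^{1-\alpha}\kappa\nabla U,\nabla U}\,dt.
\]
Inserting this into the Cauchy--Schwarz bound and splitting by the weighted Young inequality $ab \le \tfrac12 a^2 + \tfrac12 b^2$ exactly halves the fractional diffusion integral, which can be absorbed on the left-hand side of the energy identity. Multiplying the surviving inequality by $2$ and discarding the non-negative term $\|\jump{U}^0\|^2$ on the left then gives the claimed estimate.

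The main obstacle I anticipate is verifying that~\eqref{eq: Psi bound}, stated scalar-wise, is legitimately applied componentwise and trajectory-wise to the vector-valued, time-discontinuous function $\sqrt\kappa\,\nabla U$; once one checks that $\sqrt\kappa\,\nabla U \in L_2((0,t_n);L_2(\Omega)^d)$ and that the Riemann--Liouville derivative in~\eqref{eq: DG} is compatible with the piecewise-polynomial time structure (via the defining identity $\partial_t^{1-\alpha} = \partial_t\mathcal{I}^\alpha$ applied in a distributional sense), the rest of the argument is routine energy arithmetic and absorption.
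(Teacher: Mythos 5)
Your proposal is correct and takes essentially the same route as the paper: test with $X=U$, derive the DG energy identity (the paper invokes Mustapha's Theorem~1 for the telescoping algebra that you carry out explicitly), and absorb the source term using the Poincar\'e inequality~\eqref{eq: Poincare} together with~\eqref{eq: Psi bound} applied to $\sqrt\kappa\,\nabla U$ (legitimate since $\kappa$ is time-independent), your Cauchy--Schwarz/Young split being equivalent to the paper's choice of the weight $M=C_\Omega\Psi(\alpha)/(\kappa_{\min}t_n^{1-\alpha})$. The only cosmetic difference is that the paper proves the case $n=N$ and notes this suffices since the scheme is causal, whereas you work directly on $[0,t_n]$ and then discard the nonnegative term $\|\jump{U}^0\|^2$.
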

\begin{proof}
Let $B(U,X)$ denote the bilinear form
\[
\iprod{U^0_+,X^0_+}+\sum_{n=1}^{N-1}\iprod{\jump{U}^n,X^n_+}
    +\sum_{n=1}^N\int_{I_n}\iprod{\partial_tU,X}\,dt
    +\int_0^T\bigiprod{\partial_t^{1-\alpha}\kappa\nabla U,\nabla X}\,dt,
\]
and observe that the time-stepping equations~\eqref{eq: DG} are equivalent to
\[
B(U,X)=\iprod{\uzeroX,X^0_+}+\int_0^T\iprod{g,X}\,dt\quad
    \text{for $X\in\mathcal{W}$.}
\]
Taking $X=U$, we find by arguing as in the proof of 
Mustapha~\cite[Theorem~1]{Mustapha2015} that
\[
B(U,U)=\frac{1}{2}\biggl(\|U^0_+\|^2+\|U^N_-\|^2
    +\sum_{n=1}^{N-1}\|\jump{U}^n\|^2\biggr)
    +\int_0^T\bigiprod{\partial_t^{1-\alpha}\kappa\nabla U,\nabla X}\,dt,
\]
and so
\[
\|U^0_+\|^2+\|U^N_-\|^2+\sum_{n=1}^{N-1}\|\jump{U}^n\|^2
    +2\int_0^T\bigiprod{\partial_t^{1-\alpha}\kappa\nabla U,\nabla U}\,dt\\
    =2\iprod{\uzeroX,U^0_+}+2\int_0^T\iprod{g,U}\,dt.
\]
For any constant~$M>0$,
\[
2\iprod{\uzeroX,U^0_+}+2\int_0^T\iprod{g,U}\,dt
    \le\|U^0_+\|^2+\|\uzeroX\|^2
    +M\int_0^T\|g\|^2\,dt+\frac{1}{M}\int_0^T\|U\|^2\,dt,
\]
and using \eqref{eq: Poincare}~and \eqref{eq: Psi bound},
\[
\frac{1}{M}\int_0^T\|U\|^2\,dt\le\frac{C_\Omega}{M}\int_0^T\|\nabla U\|^2\,dt
    \le\frac{C_\Omega\Psi(\alpha)}{M\kappa_{\min}T^{1-\alpha}}
    \int_0^T\bigiprod{\partial_t^{1-\alpha}\kappa\nabla U,\nabla U}\,dt.
\]
Choosing $M=C_\Omega\Psi(\alpha)/(\kappa_{\min}T^{1-\alpha})$ implies the 
estimate in the case~$n=N$, which completes the proof since $T=t_N$.
\end{proof}

\begin{remark}
If $p_n=0$ then we have $\|U(t)\|\le\|U^n_-\|$ for~$t\in I_N$, and likewise if 
$p_n=1$ then $\|U(t)\|\le\max(\|U^n_-\|,\|U^{n-1}_+\|)\le\|U^n_-\|+\|U^{n-1}_-\|
+\|\jump{U}^{n-1}\|$ for~$t\in I_n$.  Thus, for the piecewise 
constant~\cite{McLeanMustapha2009} and 
piecewise linear~\cite{MustaphaMcLean2013} DG schemes we can prove stability 
in~$L_\infty(L_2)$, uniformly for~$\alpha$ bounded away from zero.
\end{remark}

\begin{remark}
For the solution~$u$ of the continuous fractional diffusion problem we have the 
analogous stability property
\[
\|u(t)\|^2+\int_0^t\bigiprod{\partial_t^{1-\alpha}\kappa\nabla u,\nabla u}\,dt
    \le\|u_0\|^2+\frac{C_\Omega\Psi(\alpha)}{\kappa_{\min}t^{1-\alpha}}
    \int_0^t\|g(s)\|^2\,ds
\quad\text{for $0\le t\le T$.}
\]
The proof follows the same lines as above, except that now
\[
\int_0^T\iprod{\partial_tu,u}\,dt
    +\int_0^T\bigiprod{\partial_t^{1-\alpha}\kappa\nabla u,\nabla u}\,dt
    =\int_0^T\iprod{g,u}\,dt
\]
and 
\[
\int_0^T\iprod{\partial_tu,u}\,dt=\tfrac12\bigl(\|u(T)\|^2-\|u_0\|^2\bigr).
\]
\end{remark}

\begin{remark}
Le et al.~\cite{LeMcLeanMustapha2016} proved stability and convergence of the 
DG scheme with general~$\vec F$, but only for the lowest-order ($p_n=0$) case 
and with no spatial discretization.  Although the constants are bounded 
as~$\alpha\to1$, they blow up as~$\alpha\to1/2$ and thus the fractional exponent 
is restricted to the range~$1/2<\alpha\le1$. Huang et al.~\cite{HuangEtAl2020} 
proved similar results for a slightly modified scheme.
\end{remark}

\printbibliography
\end{document}